\numberwithin{equation}{section}
\newtheorem{Theorem}{Theorem}[section]
\newtheorem{Corollary}[Theorem]{Corollary}
\newtheorem{Lemma}[Theorem]{Lemma}
\newtheorem{Proposition}[Theorem]{Proposition}
\newtheorem{Conjecture}[Theorem]{Conjecture}
 { \theoremstyle{definition}
\newtheorem{Example}[Theorem]{Example}
\newtheorem{Remark}[Theorem]{Remark}
\newtheorem{Openproblem}[Theorem]{Open Problem}
 }
\newcommand{\N}{\mathbb{N}} % naturals
\newcommand{\F}{\mathcal{F}} % field
\newcommand{\Fibonomial}[2]{{{#1 + #2} \choose #2}_\F} % Fibonomial number
\newcommand{\qFibonomial}[2]{{\left[\!\begin{smallmatrix} #1 + #2 \\ #2 \end{smallmatrix}\!\right]}_\F} % (m+n choose n)
\newcommand{\qFibonomialBennet}[2]{{\left[\!\begin{smallmatrix} #1 \\ #2 \end{smallmatrix}\!\right]}_\F} % (n choose k)
\newcommand{\BigqFibonomial}[2]{\genfrac{[}{]}{0pt}{0}{#1 + #2}{#2}_\F} % Big q Fibonomial number
\newcommand{\BigqFibonomialBennet}[2]{\genfrac{[}{]}{0pt}{0}{#1}{#2}_\F} % Big q Fibonomial number n choose k
\newcommand{\Fibofactorial}[1]{F_{#1}^!} %
\newcommand{\qFibofactorial}[1]{[F_{#1}]_q^!} %
\newcommand{\FiboCatalan}[1]{\frac{1}{F_{#1+1}}{2#1 \choose #1}_\F} %
\newcommand{\mnCat}[2]{\operatorname{Cat}_{#1,#2}} %
\newcommand{\mnFCat}[2]{\operatorname{FCat}_{#1,#2}} %
\newcommand{\nFCat}[1]{\operatorname{FCat}_{#1}} %
\newcommand{\mnCatalan}[2]{\frac{1}{#1+#2}{#1+#2 \choose #2}} %
\newcommand{\mnFiboCatalan}[2]{\frac{1}{F_{m+n}}\Fibonomial{m}{n}} %
\newcommand{\mnqFiboCatalan}[2]{\frac{1}{[F_{m+n}]}\qFibonomial{m}{n}} %
\newcommand{\BigmnqFiboCatalan}[2]{\frac{1}{[F_{m+n}]}\BigqFibonomial{m}{n}} %
\newcommand{\elliptic}[1]{[#1]_{a,b;q,p}}	% elliptic number
\newcommand{\ellipticFibonomial}[2]{{\left[\!\begin{smallmatrix} #1 + #2 \\ #2 \end{smallmatrix}\!\right]}_{\F_{a,b;q,p}}} % (m+n choose n)
\newcommand{\BigellipticFibonomial}[2]{\genfrac{[}{]}{0pt}{0}{#1 + #2}{#2}_{\F_{a,b;q,p}}} %
\newcommand{\BigellipticFibonomialBennet}[2]{\genfrac{[}{]}{0pt}{0}{#1}{#2}_{\F_{a,b;q,p}}} %
\newcommand{\ellipticFiboFactorial}[1]{[F_{#1}]_{a,b;q,p}^!} %
\newcommand{\eweight}{{\widetilde \omega}}
\newcommand{\qBinomial}[3]{\genfrac{[}{]}{0pt}{0}{#1}{#2}_{#3}} % Big q Binomial
\newcommand{\specialdominocolor}{orange!20} % special vertical domino color
\newcommand{\pathwidth}{0.7mm}
\newcommand{\boxsize}{1.8em}
\newcommand{\grid}[2]{%#1=width,#2=height
\draw (0,0) grid (#1,#2);
}
\newcommand{\horizontalDomino}[2]{
    % #1=x-coodinate of the top-right corner of the domino (column)
    % #2=y-coordinate of the top-right corner of the domino (row)
% \draw[thick,fill=\horizontaldominocolor] (#1,#2) rectangle +(-2,-1);
\draw (#1,#2) rectangle +(-2,-1);
\draw (#1,#2)++(-1,0) -- +(0,-1);
\draw (#1,#2)++(-0.5,-0.5) -- +(-1,0);
\fill (#1,#2)+(-0.5,-0.5) circle (2.5pt);
\fill (#1,#2)+(-1.5,-0.5) circle (2.5pt);
}
\newcommand{\coloredhorizontalDomino}[3]{
	% #1=x-coodinate of the top-right corner of the domino (column)
	% #2=y-coordinate of the top-right corner of the domino (row)
	% #3=color of the domino
	% \draw[thick,fill=\horizontaldominocolor] (#1,#2) rectangle +(-2,-1);
	\draw (#1,#2) rectangle +(-2,-1);
	\draw (#1,#2)++(-1,0) -- +(0,-1);
	\draw [color=#3] (#1,#2)++(-0.5,-0.5) -- +(-1,0);
	\fill [color=#3] (#1,#2)+(-0.5,-0.5) circle (2.5pt);
	\fill [color=#3] (#1,#2)+(-1.5,-0.5) circle (2.5pt);
}
\newcommand{\verticalDomino}[2]{
    % #1=x-coodinate of the top-right corner of the domino (column)
    % #2=y-coordinate of the top-right corner of the domino (row)
% \draw[thick,fill=\verticaldominocolor] (#1,#2) rectangle +(-1,-2);
\draw (#1,#2) rectangle +(-1,-2);
\draw (#1,#2)++(0,-1) -- +(-1,0);
\draw (#1,#2)++(-0.5,-0.5) -- +(0,-1);
\fill (#1,#2)+(-0.5,-0.5) circle (2.5pt);
\fill (#1,#2)+(-0.5,-1.5) circle (2.5pt);
}
\newcommand{\specialDomino}[2]{
    % #1=x-coodinate of the top-right corner of the domino (column)
    % #2=y-coordinate of the top-right corner of the domino (row)
\draw[fill=\specialdominocolor] (#1,#2) rectangle +(-1,-2);
\draw (#1,#2)++(0,-1) -- +(-1,0);
\draw (#1,#2)++(-0.5,-0.5) -- +(0,-1);
\fill (#1,#2)+(-0.5,-0.5) circle (2.5pt);
\fill (#1,#2)+(-0.5,-1.5) circle (2.5pt);
}
\newcommand{\specialhorizontalDomino}[2]{
	% #1=x-coodinate of the top-right corner of the domino (column)
	% #2=y-coordinate of the top-right corner of the domino (row)
	\draw[fill=\specialdominocolor]  (#1,#2) rectangle +(-2,-1);
	\draw (#1,#2)++(-1,0) -- +(0,-1);
	\draw (#1,#2)++(-0.5,-0.5) -- +(-1,0);
	\fill (#1,#2)+(-0.5,-0.5) circle (2.5pt);
	\fill (#1,#2)+(-1.5,-0.5) circle (2.5pt);
}
\newcommand{\monomino}[2]{
    % #1=x-coodinate of the top-right corner of the domino (column)
    % #2=y-coordinate of the top-right corner of the domino (row)
\draw (#1,#2) rectangle +(-1,-1);
\fill (#1,#2)+(-0.5,-0.5) circle (2.5pt);
}
\newcommand{\labeledGrid}[3]{%#1=width,#2=height,#3=Fibonacci sequence
\draw (0,0) grid (#1,#2);
\foreach \x in {1,...,#1}
	\node at (\x-0.5,-0.5) {$\pgfmathparse{\csname#3\endcsname[\x]}\pgfmathresult$};
\foreach \y in {1,...,#2}
	\node at (-0.5,\y-0.5) {$\pgfmathparse{\csname#3\endcsname[\y]}\pgfmathresult$};
}
\newcommand{\ordinaryLabeledGrid}[2]{%#1=width,#2=height
	\draw (0,0) grid (#1,#2);
	\foreach \x in {1,...,#1}
	\node at (\x-0.5,-0.5) {$\x$};
	\foreach \y in {1,...,#2}
	\node at (-0.5,\y-0.5) {$\y$};
}
\newcommand{\labeledHorizontalDomino}[3]{
	% #1=x-coodinate of the top-right corner of the domino (column)
	% #2=y-coordinate of the top-right corner of the domino (row)
	% #3=Fibonacci sequence
	% \draw[thick,fill=\horizontaldominocolor] (#1,#2) rectangle +(-2,-1);
	\draw (#1,#2) rectangle +(-2,-1);
	% \draw[*-*] (#1,#2)++(-0.5,-0.5)
	%     node[above]{
	%     $\pgfmathparse{\csname#3\endcsname[#1]}\pgfmathresult
	%     \cdot
	%     \pgfmathparse{\csname#3\endcsname[#2]}\pgfmathresult$
	%     } -- +(-1,0);
	\draw (#1,#2)++(-0.5,-0.5) -- +(-1,0);
	\fill (#1,#2)+(-0.5,-0.5) circle (2.5pt) node[above=-0.03cm, inner sep=1.5pt] {\footnotesize $q^{\pgfmathparse{\csname#3\endcsname[#1]}\pgfmathresult  \cdot  \pgfmathparse{\csname#3\endcsname[#2]}\pgfmathresult}$};
	\fill (#1,#2)+(-1.5,-0.5) circle (2.5pt);
}
\newcommand{\labeledVerticalDomino}[3]{
	% #1=x-coodinate of the top-right corner of the domino (column)
	% #2=y-coordinate of the top-right corner of the domino (row)
	% #3=Fibonacci sequence
	% \draw[thick,fill=\verticaldominocolor] (#1,#2) rectangle +(-1,-2);
	\draw (#1,#2) rectangle +(-1,-2);
	\draw (#1,#2)++(-0.5,-0.5) -- +(0,-1);
	\fill (#1,#2)+(-0.5,-0.5) circle (2.5pt) node[above=-0.03cm, inner sep=1.5pt] {\footnotesize $q^{\pgfmathparse{\csname#3\endcsname[#1]}\pgfmathresult  \cdot  \pgfmathparse{\csname#3\endcsname[#2]}\pgfmathresult}$};
	\fill (#1,#2)+(-0.5,-1.5) circle (2.5pt);
}
\newcommand{\labeledSpecialDomino}[3]{
    % #1=x-coodinate of the top-right corner of the domino (column)
    % #2=y-coordinate of the top-right corner of the domino (row)
    % #3=Fibonacci sequence
\draw[fill=\specialdominocolor] (#1,#2) rectangle +(-1,-2);
\draw (#1,#2)++(0,-1) -- +(-1,0);
\draw (#1,#2)++(-0.5,-0.5) -- +(0,-1);
\fill (#1,#2)+(-0.5,-0.5) circle (2.5pt);
\fill (#1,#2)+(-0.5,-1.5) circle (2.5pt);
}
\newcommand{\labeledSpecialDominoLabeling}[3]{
	% #1=x-coodinate of the top-right corner of the domino (column)
	% #2=y-coordinate of the top-right corner of the domino (row)
	% #3=Fibonacci sequence
	\node[above=-0.03cm, inner sep=1.5pt] at (#1-0.5,#2-0.5) {\footnotesize $q^{\pgfmathparse{\csname#3\endcsname[#1+1]}\pgfmathresult  \cdot  \pgfmathparse{\csname#3\endcsname[#2]}\pgfmathresult}$};
}
\newcommand{\ellipticLabeledHorizontalDomino}[2]{
	% #1=x-coodinate of the top-right corner of the domino (column)
	% #2=y-coordinate of the top-right corner of the domino (row)
	% \draw[thick,fill=\horizontaldominocolor] (#1,#2) rectangle +(-2,-1);
	\draw (#1,#2) rectangle +(-2,-1);
	% \draw[*-*] (#1,#2)++(-0.5,-0.5)
	%     node[above]{
	%     $\pgfmathparse{\csname#3\endcsname[#1]}\pgfmathresult
	%     \cdot
	%     \pgfmathparse{\csname#3\endcsname[#2]}\pgfmathresult$
	%     } -- +(-1,0);
	\draw (#1,#2)++(-0.5,-0.5) -- +(-1,0);
	\fill (#1,#2)+(-0.5,-0.5) circle (2.5pt);
	\node[above=0.1cm, inner sep=1.5pt] at (#1-0.5,#2-0.5) {\scriptsize $\omega_1 (#1 , #2)$};
	\fill (#1,#2)+(-1.5,-0.5) circle (2.5pt);
}
\newcommand{\ellipticLabeledVerticalDomino}[2]{
	% #1=x-coodinate of the top-right corner of the domino (column)
	% #2=y-coordinate of the top-right corner of the domino (row)
	% \draw[thick,fill=\verticaldominocolor] (#1,#2) rectangle +(-1,-2);
	\draw (#1,#2) rectangle +(-1,-2);
	\draw (#1,#2)++(-0.5,-0.5) -- +(0,-1);
	\fill (#1,#2)+(-0.5,-0.5) circle (2.5pt) node[above=0.1cm, inner sep=1.5pt] {\scriptsize $\omega_1 (#2 , #1)$};
	\fill (#1,#2)+(-0.5,-1.5) circle (2.5pt);
}
\newcommand{\ellipticLabeledSpecialDomino}[2]{
	% #1=x-coodinate of the top-right corner of the domino (column)
	% #2=y-coordinate of the top-right corner of the domino (row)
	\draw[fill=\specialdominocolor] (#1,#2) rectangle +(-1,-2);
	\draw (#1,#2)++(0,-1) -- +(-1,0);
	\draw (#1,#2)++(-0.5,-0.5) -- +(0,-1);
	\fill (#1,#2)+(-0.5,-0.5) circle (2.5pt);
	\fill (#1,#2)+(-0.5,-1.5) circle (2.5pt);
}
\newcommand{\ellipticLabeledSpecialDominoLabeling}[2]{
	% #1=x-coodinate of the top-right corner of the domino (column)
	% #2=y-coordinate of the top-right corner of the domino (row)
	\node[above=0.1cm, inner sep=1.5pt] at (#1-0.5,#2-0.5) {\scriptsize $\omega_2 (#1 , #2)$};
}
\newcommand{\fiboSquare}[3]{%#1=x-position of lower-left corner,#2=y-position of ll-corner,#3=width
	\draw[thick] (#1,#2) rectangle +(#3,#3);
}
\newcommand{\tilings}[2]{\mathcal{T}_{#1,#2}}
\newcommand{\staircaseTilings}[2]{\mathcal{S}_{#1,#2}}
\begin{document}
\allowdisplaybreaks

\newcommand{\arXivNumber}{1911.12785}

\renewcommand{\thefootnote}{}

\renewcommand{\PaperNumber}{076}

\FirstPageHeading

\ShortArticleName{Elliptic and $q$-Analogs of the Fibonomial Numbers}

\ArticleName{Elliptic and $\boldsymbol{q}$-Analogs of the Fibonomial Numbers\footnote{This paper is a~contribution to the Special Issue on Elliptic Integrable Systems, Special Functions and Quantum Field Theory. The full collection is available at \href{https://www.emis.de/journals/SIGMA/elliptic-integrable-systems.html}{https://www.emis.de/journals/SIGMA/elliptic-integrable-systems.html}}}

\Author{Nantel BERGERON~$^{\dag}$, Cesar CEBALLOS~$^{\ddag}$ and Josef K\"USTNER~$^{\S}$}

\AuthorNameForHeading{N.~Bergeron, C.~Ceballos and J.~K\"ustner}

\Address{$^{\dag}$~Department of Mathematics and Statistics, York University, Toronto, Canada}
\EmailD{\href{mailto:bergeron@mathstat.yorku.ca}{bergeron@mathstat.yorku.ca}}
\URLaddressD{\url{http://www.math.yorku.ca/bergeron/}}

\Address{$^{\ddag}$~Institute of Geometry, TU Graz, Graz, Austria}
\EmailD{\href{mailto:cesar.ceballos@tugraz.at}{cesar.ceballos@tugraz.at}}
\URLaddressD{\url{http://www.geometrie.tugraz.at/ceballos/}}

\Address{$^{\S}$~Faculty of Mathematics, University of Vienna, Vienna, Austria}
\EmailD{\href{mailto:josef.kuestner@univie.ac.at}{josef.kuestner@univie.ac.at}}
\URLaddressD{\url{http://homepage.univie.ac.at/josef.kuestner/}}

\ArticleDates{Received March 14, 2020, in final form July 29, 2020; Published online August 13, 2020}

\Abstract{In 2009, Sagan and Savage introduced a combinatorial model for the Fibonomial numbers, integer numbers that are obtained from the binomial coefficients by replacing each term by its corresponding Fibonacci number. In this paper, we present a combinatorial description for the $q$-analog and elliptic analog of the Fibonomial numbers. This is achieved by introducing some $q$-weights and elliptic weights to a slight modification of the combinatorial model of Sagan and Savage.}

\Keywords{Fibonomial; Fibonacci; $q$-analog; elliptic analog; weighted enumeration}

\Classification{11B39; 05A30; 05A10}

\renewcommand{\thefootnote}{\arabic{footnote}}
\setcounter{footnote}{0}

\section{Introduction}

The Fibonacci sequence $0, 1, 1, 2, 3, 5, 8, 13, 21, 34, \dots$ is
one of the most important and beautiful sequences in mathematics.
It starts with the numbers $F_0=0$ and $F_1=1$, and is recursively defined by the formula $F_n=F_{n-1}+F_{n-2}$.

Fibonacci analogs of famous numbers, such as the binomial coefficients and Catalan numbers
\[
{m+n \choose n}=\frac{(m+n)!}{m!\cdot n!} \qquad {\text{and}} \qquad \frac{1}{n+1}{2n \choose n},
\]
have intrigued some mathematicians over the last few years~\cite{generalized_amdeberhan_2014,combinatorial_benjamin_2014,combinatorial_bennett_2018,thefractal_chen_2014,combinatorial_sagan_2010,combinatorial_tirrell_2019}.
The \emph{Fibonomial} and \emph{Fibo-Catalan} numbers are defined, respectively, as
\[
\Fibonomial{m}{n} := \frac{\Fibofactorial{m+n}}{\Fibofactorial{m}\cdot \Fibofactorial{n}} \qquad {\text{and}} \qquad \FiboCatalan{n},
\]
where $\Fibofactorial{n}:= \prod\limits_{k=1}^n F_k$
is the Fibonacci analog of $n!$.
These rational expressions turn out to be positive integers.
In~\cite{combinatorial_sagan_2010}, Sagan and Savage introduced a combinatorial model to interpret
the Fibonomial numbers in terms of certain tilings of an $m\times n$ rectangle.
A \emph{path-domino tiling} of an $m\times n$ rectangle is a tiling with monominos and dominos and a lattice path from $(0,0)$ to $(m,n)$ is specified, and such that:
\pagebreak
\begin{itemize}\itemsep=0pt
	\item all tiles above the path are either monominos or horizontal dominos;
	\item all tiles below the path are either monominos or vertical dominos; and
	\item all tiles that touch the path from below are vertical dominos.
\end{itemize}
We call these last tiles touching the path from below \emph{special vertical dominos},
and denote by~$\tilings{m}{n}$ the collection of all path-domino tilings of an $m\times n$ rectangle.
An example is illustrated on the left of Fig.~\ref{fig_pathdominotiling}.
The following result is a special case of~\cite[Theorem~3]{combinatorial_sagan_2010}.\footnote{The path-domino tilings here are a slight modification of the tilings used in~\cite{combinatorial_sagan_2010}.	The only difference is that in~\cite{combinatorial_sagan_2010} the tiles below the path touching the bottom of the rectangle are required to be vertical dominos, while here this condition is required for the tiles below the path touching the path itself. This modification is essential to make our combinatorial model work.}

\begin{Theorem}[{\cite{combinatorial_sagan_2010}}]
	The Fibonomial number $\Fibonomial{m}{n}$ counts the number of path-domino tilings of an $m\times n$ rectangle.
\end{Theorem}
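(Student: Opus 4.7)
I plan to prove the theorem by induction on $m+n$. The base cases $m=0$ and $n=0$ are immediate: the rectangle is degenerate, the unique (empty) path-domino tiling exists, and $\Fibonomial{m}{n}=1$. For the inductive step I would use the Fibonomial recurrence
\[
\Fibonomial{m}{n} \;=\; F_{m+1}\,\Fibonomial{m}{n-1} \;+\; F_{n-1}\,\Fibonomial{m-1}{n},
\]
which follows from the Fibonacci addition identity $F_{m+n}=F_{m+1}F_n+F_m F_{n-1}$ applied to the factorial definition. By the inductive hypothesis it is enough to partition $\tilings{m}{n}=A\sqcup B$ with $|A|=F_{m+1}\,|\tilings{m}{n-1}|$ and $|B|=F_{n-1}\,|\tilings{m-1}{n}|$.

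I would split by the direction of the last step of the lattice path: let $A$ consist of the tilings whose path ends with a North step and $B$ those whose path ends with an East step. In case $A$, the path attains height $n$ only at its terminal vertex $(m,n)$, so every cell of the top row lies above the path and no top edge of a top-row cell lies on the path. The top row is therefore tiled independently by monominos and horizontal dominos in $F_{m+1}$ ways, and no vertical domino can bridge rows $n-1$ and $n$ because tiles above the path must be monominos or horizontal dominos. Truncating the path and the tiling to rows $1,\ldots,n-1$ yields a path-domino tiling of an $m\times(n-1)$ rectangle, and this correspondence is reversible. Hence $|A|=F_{m+1}\,|\tilings{m}{n-1}|$.

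In case $B$, the path visits $x=m$ only at its terminal vertex, so the entire column $m$ lies below the path. Among the cells of column $m$, only $(m,n)$ has its top edge on the path --- via the final East step --- so $(m,n)$ is the unique cell of column $m$ that touches the path from below. The special-vertical-domino condition therefore forces its tile to be the vertical domino on $\{(m,n-1),(m,n)\}$, and the remaining $n-2$ cells of column $m$ are tiled freely by monominos and vertical dominos, giving $F_{n-1}$ choices. Since column $m$ lies entirely below the path, no horizontal domino straddles the boundary with column $m-1$, and peeling off column $m$ together with the final East step leaves a valid path-domino tiling of the $(m-1)\times n$ rectangle. Hence $|B|=F_{n-1}\,|\tilings{m-1}{n}|$, and summing the two cases closes the induction.

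The main technical point I expect to verify carefully is that in case $B$ exactly one cell of column $m$, namely $(m,n)$, has its top edge on the path, giving the clean forced-then-free structure; and in case $A$ no cell of row $n$ has its top edge on the path, so no special constraint applies there. One must also check that the special-vertical-domino condition transfers correctly to the sub-rectangle in each case; this holds because the path segment discarded (the final North step in case $A$, or the final East step in case $B$) contributes no top edges inside the sub-rectangle.
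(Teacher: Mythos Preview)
Your proposal is correct and follows essentially the same approach as the paper: although the paper does not prove this cited theorem directly, its proof of the $q$-analog (Theorem~\ref{thm_qFibonomial}) specializes at $q=1$ to precisely your argument---induction via the Fibonomial recurrence derived from $F_{m+n}=F_{m+1}F_n+F_mF_{n-1}$, splitting $\tilings{m}{n}$ according to whether the last step of the path is North (peel off the top row, factor $F_{m+1}$) or East (peel off the last column with its forced special vertical domino, factor $F_{n-1}$). The only cosmetic difference is that the paper uses $m=1$ or $n=1$ as base cases rather than your $m=0$ or $n=0$.
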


The main objective of this paper is to present a $q$-analog and an elliptic analog generalization of this result.
The resulting $q$-Fibonomial and elliptic Fibonomial numbers count the number of path-domino tilings of an $m\times n$ rectangle
according to their $q$-weights and elliptic weights, respectively.

\section[$q$-analog of the Fibonomial numbers]{$\boldsymbol{q}$-analog of the Fibonomial numbers}\label{sec_q}
We denote by $\N:= \{1,2,3,\dots\}$ the set of natural numbers.
The \emph{$q$-analog of $n\in \N$} is defined as
\[
[n]_q := 1+q+q^2+\dots + q^{n-1}.
\]
The evaluation of this polynomial at $q=1$ recovers the number $n$.
To simplify notation, we sometimes omit the subindex $q$ when it is clear from the context.
Before studying the $q$-analog of the Fibonomial numbers, let us recall some useful and known straightforward lemmas.

\begin{Lemma} \label{lem_qidentities}
	For $m,n\in \N$, the following identities hold:
	\begin{gather}
	[m+n]_q = [m]_q + q^m[n]_q, \label{lem_qidentities1} \\
	[m\cdot n]_q = [m]_q [n]_{q^m}. \nonumber %\label{lem_qidentities2}
	\end{gather}
\end{Lemma}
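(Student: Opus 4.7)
The plan is to verify both identities directly from the definition $[k]_q = 1 + q + q^2 + \cdots + q^{k-1}$ by elementary rearrangement of terms; no combinatorial model or induction is required, which matches the paper's description of these as ``useful and known straightforward''.

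For the first identity $[m+n]_q = [m]_q + q^m[n]_q$, I would split the $m+n$ terms of the sum $1 + q + \cdots + q^{m+n-1}$ into the initial block $1 + q + \cdots + q^{m-1} = [m]_q$ and the trailing block $q^m + q^{m+1} + \cdots + q^{m+n-1}$, then pull out the common factor $q^m$ from the latter to recognize it as $q^m(1 + q + \cdots + q^{n-1}) = q^m[n]_q$.

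For the second identity $[m\cdot n]_q = [m]_q [n]_{q^m}$, I would partition the $mn$ terms $1, q, \ldots, q^{mn-1}$ into $n$ consecutive blocks of length $m$. The $i$-th block (for $i = 0, 1, \ldots, n-1$) equals $q^{im}(1 + q + \cdots + q^{m-1}) = q^{im}[m]_q$, and summing over $i$ yields $[m]_q \sum_{i=0}^{n-1} (q^m)^i = [m]_q[n]_{q^m}$, which is precisely the right-hand side since $[n]_{q^m} = 1 + q^m + q^{2m} + \cdots + q^{(n-1)m}$ by the paper's convention. Equivalently, using the closed form $[k]_q = (1-q^k)/(1-q)$, the identity reduces to the telescoping factorization $\frac{1-q^{mn}}{1-q} = \frac{1-q^m}{1-q}\cdot\frac{1-q^{mn}}{1-q^m}$.

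There is no genuine obstacle; the lemma is bookkeeping. The only point requiring a moment of care is aligning indices in the block decomposition with the convention for $[n]_{q^m}$, and noting that the rational-function form is valid for generic $q$ while the polynomial identity extends to all $q$ (including $q=1$) by continuity of both sides as polynomials in $q$.
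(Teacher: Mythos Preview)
Your proof is correct. The paper does not actually supply a proof of this lemma; it is stated as a ``useful and known straightforward'' fact and left unproved, so your direct verification from the definition $[k]_q = 1 + q + \cdots + q^{k-1}$ is entirely in keeping with the paper's treatment.
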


It is well known that the Fibonacci number $F_n$ counts the number of tilings of an \emph{$(n-1)$-strip} (a rectangle with diagonal endpoints $(0,0)$ and $(n-1,1)$) using dominos and monominos. Given such a tiling $T$, we define the \emph{weight $\omega(T)$} of $T$ as the product of the weights of its tiles, where
a monomino has weight~1 and a domino whose top-right coordinate is $(i,1)$ has weight $q^{F_i}$.
The weight of a 0-strip is by definition equal to~1.

\begin{Lemma}[{cf.\ \cite{combinatorial_sagan_2010}}]\label{lem_q_analog_Fibonacci}
	For $n\in \N$, the $q$-analog of the Fibonacci numbers\footnote{The elliptic and $q$-analogs of the Fibonacci numbers we use are different from the analogs considered, e.g., in~\cite{elliptic_schlosser_2018}.} can be computed as
	\[
	[F_n]_q = \sum_T {\omega(T)},
	\]
	where $	[F_n]_q = 1 + q + q^2 + \dots + q^{F_n -1} $ and the sum ranges over all tilings of an $(n-1)$-strip using dominos and monominos.
\end{Lemma}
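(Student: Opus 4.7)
The plan is to prove the identity by induction on $n$, mirroring the standard Fibonacci recursion combinatorially on the tiling side and algebraically via Lemma~\ref{lem_qidentities} on the $q$-integer side.

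First I would handle the base cases. For $n=1$, the $0$-strip has a unique (empty) tiling with weight $1$, matching $[F_1]_q = [1]_q = 1$. For $n=2$, the $1$-strip admits only a single monomino, again with weight~$1$, matching $[F_2]_q = [1]_q = 1$.

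For the inductive step, assume the formula holds for all $(n'-1)$-strips with $n' < n$, and consider the set of tilings of the $(n-1)$-strip. Split this set according to the rightmost tile. If the rightmost tile is a monomino, removing it leaves a tiling of the $(n-2)$-strip and the weights of all remaining tiles are unchanged; by induction the total contribution is $[F_{n-1}]_q$. If the rightmost tile is a domino, it necessarily occupies the cells with top-right coordinates $(n-1,1)$, hence carries weight $q^{F_{n-1}}$; removing it leaves a tiling of the $(n-3)$-strip whose tiles retain their weights (the weight of each remaining domino depends only on its own position), so by induction the total contribution is $q^{F_{n-1}}[F_{n-2}]_q$. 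Summing,
\[
\sum_T \omega(T) = [F_{n-1}]_q + q^{F_{n-1}}[F_{n-2}]_q.
\]
Applying identity~\eqref{lem_qidentities1} from Lemma~\ref{lem_qidentities} with $m = F_{n-1}$ and $n$ replaced by $F_{n-2}$, and using $F_n = F_{n-1} + F_{n-2}$, yields
\[
[F_{n-1}]_q + q^{F_{n-1}}[F_{n-2}]_q = [F_{n-1}+F_{n-2}]_q = [F_n]_q,
\]
which closes the induction.

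There is no real obstacle here, since both the combinatorial recursion and its algebraic counterpart are immediate; the only point requiring care is observing that the weight function on the remaining tiles is intrinsic to their position and hence unaltered by deleting the rightmost tile, which ensures the decomposition is compatible with the inductive hypothesis.
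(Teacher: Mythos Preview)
Your proof is correct and takes essentially the same approach as the paper: induction on $n$ with base cases $n=1,2$, splitting tilings of the $(n-1)$-strip by whether the rightmost tile is a monomino or a domino, and invoking identity~\eqref{lem_qidentities1} to match the recursion $[F_n]_q = [F_{n-1}]_q + q^{F_{n-1}}[F_{n-2}]_q$. The paper's version is terser and runs the computation in the opposite direction (starting from $[F_n]_q$ and identifying the two summands combinatorially), but the content is identical.
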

\begin{proof}The result is clearly true for $n=1,2$. Let $n>2$, applying equation~\eqref{lem_qidentities1} from Lemma~\ref{lem_qidentities} we get
	\begin{gather*}
	[F_n]_q = [F_{n-1}+F_{n-2}]_q = [F_{n-1}]_q + q^{F_{n-1}}[F_{n-2}]_q.
	\end{gather*}
	By induction, the first term of this sum corresponds to the tilings of an $(n-1)$-strip that finish with a monomino,
	while the second term to the tilings of an $(n-1)$-strip that finish with a~domino.
\end{proof}

\begin{Lemma}\label{lem_qFibIdentities}
	For $m,n\in \N$, the following identities hold:
	\begin{gather}
	F_{m+n} = F_nF_{m+1} + F_mF_{n-1}, \label{lem_qFibIdentities1} \\
	[F_{m+n}]_q = [F_n]_q[F_{m+1}]_{q^{F_n}} + q^{F_nF_{m+1}}[F_m]_q[F_{n-1}]_{q^{F_m}}. \label{lem_qFibIdentities2}
	\end{gather}
\end{Lemma}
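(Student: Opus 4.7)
The plan is to establish~\eqref{lem_qFibIdentities1} first, and then derive~\eqref{lem_qFibIdentities2} as a direct algebraic consequence using the two identities collected in Lemma~\ref{lem_qidentities}.

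For~\eqref{lem_qFibIdentities1}, which is a classical Fibonacci identity, I would use a short induction on $m$ with base cases $m=1,2$ verified directly from the Fibonacci recursion. In the inductive step, split $F_{(m+1)+n} = F_{m+n} + F_{m+n-1}$, substitute the inductive hypotheses for $(m,n)$ and $(m-1,n)$, and regroup using $F_{k+1}+F_k=F_{k+2}$ to obtain $F_n F_{m+2} + F_{n-1} F_{m+1}$, which is the $m+1$ case. Alternatively, a combinatorial proof via Lemma~\ref{lem_q_analog_Fibonacci} works: tile an $(m+n-1)$-strip and cut at the boundary between columns $n-1$ and $n$; either no domino crosses the cut (the left and right pieces tile independently, contributing $F_n\cdot F_{m+1}$) or a single domino covers columns $n-1$ and $n$ (the remaining pieces are an $(n-2)$-strip and an $(m-1)$-strip, contributing $F_{n-1}\cdot F_m$).

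For~\eqref{lem_qFibIdentities2}, substitute~\eqref{lem_qFibIdentities1} into $[F_{m+n}]_q$ and apply the two rules of Lemma~\ref{lem_qidentities} in order: first, the addition rule with $a=F_nF_{m+1}$ and $b=F_mF_{n-1}$ splits
\[
[F_nF_{m+1}+F_mF_{n-1}]_q \;=\; [F_nF_{m+1}]_q + q^{F_nF_{m+1}}[F_mF_{n-1}]_q;
\]
then the multiplication rule, applied with $(a,b)=(F_n,F_{m+1})$ to the first summand and with $(a,b)=(F_m,F_{n-1})$ to the second, expands these into $[F_n]_q[F_{m+1}]_{q^{F_n}}$ and $[F_m]_q[F_{n-1}]_{q^{F_m}}$, reproducing the right-hand side of~\eqref{lem_qFibIdentities2}.

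There is no substantive obstacle: the identity~\eqref{lem_qFibIdentities2} is precisely the image of the classical identity~\eqref{lem_qFibIdentities1} under the mechanical $q$-bracket dictionary provided by Lemma~\ref{lem_qidentities}, and the asymmetric bases $q^{F_n}$ and $q^{F_m}$ on the two summands are forced by the asymmetric choice of $a$ in the addition step. The only minor bookkeeping points are to handle the base cases of the induction for~\eqref{lem_qFibIdentities1} correctly, and to note that the statement remains consistent at the boundary $n=1$, since $F_{n-1}=F_0=0$ makes $[F_{n-1}]_{q^{F_m}}=0$ and kills the second summand on both sides.
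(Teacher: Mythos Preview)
Your proposal is correct and matches the paper's approach: the paper proves~\eqref{lem_qFibIdentities1} by the same combinatorial cut argument (cutting at $x=m$ rather than your $x=n-1$, a symmetric choice), and then derives~\eqref{lem_qFibIdentities2} by applying Lemma~\ref{lem_qidentities} to~\eqref{lem_qFibIdentities1} exactly as you spell out. The only cosmetic difference is that you lead with induction for~\eqref{lem_qFibIdentities1} and offer the tiling argument as an alternative, whereas the paper gives only the tiling argument.
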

\begin{proof}
	Equation~\eqref{lem_qFibIdentities1} is a well known identity for Fibonacci numbers; see for instance~\cite[Lemma~1]{combinatorial_sagan_2010}.	
	The Fibonacci number $F_{m+n}$ counts the number of tilings of an $(m+n-1)$-strip with monominos and dominos.
	These tilings can be subdivided into two types: those containing a domino which is cut in two by the line $x=m$, and all other tilings.
	The first kind is counted by~$F_mF_{n-1}$ (the number of tilings of an $(m-1)$-strip times the number of tilings of an $(n-2)$-strip),
	while
	the second kind is counted by~$F_nF_{m+1}$ (the number of tilings of an $m$-strip times the number of tilings of an $(n-1)$-strip).
	Therefore, equation~\eqref{lem_qFibIdentities1} follows.
	
	Applying Lemma~\ref{lem_qidentities} to~\eqref{lem_qFibIdentities1} leads to equation~\eqref{lem_qFibIdentities2}.
\end{proof}

For $m,n\in \N$, the \emph{$q$-analog of the Fibonomial number} is defined as
\begin{gather*}
\BigqFibonomial{m}{n} := \frac{\qFibofactorial{m+n}}{\qFibofactorial{m}\cdot \qFibofactorial{n}},
\end{gather*}
where
$\qFibofactorial{n}:= \prod\limits_{k=1}^n [F_k]_q$
is the $q$-Fibonacci analog of $n!$.
Surprisingly, this rational expression turns out to be a polynomial. Our objective is to present a combinatorial model to describe it.
In order to achieve this, we will introduce some $q$-weights associated to path-domino tilings of an $m\times n$ rectangle (with $m$ columns and $n$ rows).

Let $T\in \tilings{m}{n}$ be a path-domino tiling of an $m\times n$ rectangle.
The \emph{$q$-weights} of the possible tiles in $T$ are defined as follows

\begin{center}
\begin{tabular}{cccc}
 $\omega \left( \,
\begin{tikzpicture}[x=\boxsize,y=\boxsize,baseline=-4.5mm]
\monomino{0}{0}
\end{tikzpicture}
\, \right) = 1$,
& \
 $\omega \left( \,
\begin{tikzpicture}[x=\boxsize,y=\boxsize,baseline=-4.5mm]
\horizontalDomino{0}{0}
\end{tikzpicture}
\, \right) = q^{F_iF_j}$,
& \
 $\omega \left( \,
\begin{tikzpicture}[x=\boxsize,y=\boxsize,baseline=-8mm]
\verticalDomino{0}{0}
\end{tikzpicture}
\, \right) = q^{F_iF_j}$,
& \
 $\omega \left( \,
\begin{tikzpicture}[x=\boxsize,y=\boxsize,baseline=-8mm]
\specialDomino{0}{0}
\end{tikzpicture}
\, \right) = q^{F_{i+1}F_j}$,
\end{tabular}
\end{center}

\noindent where $(i,j)$ denotes the coordinate of the top-right corner of the tile, and the shaded
vertical domino represents a special vertical domino touching the path from below.
The \emph{$q$-weight} of~$T$ is defined as the product of the weights of its tiles; see an example in Fig.~\ref{fig_pathdominotiling}.
The following theorem is one of our main results.

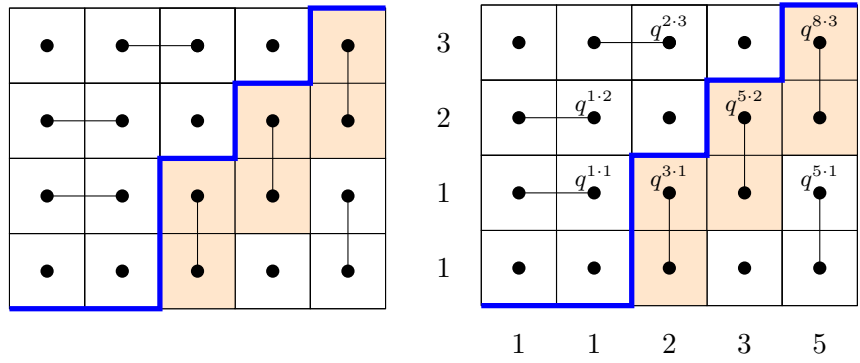
\begin{figure}[t]\centering
\begin{tikzpicture}
\node at (0.5,-0.5) {\color{white} a}; % this is just a trick to move the first figure so that both figures are at the same hegiht (I did not know how to do it differently)
%grid
\grid{5}{4}
%monominos
\monomino{1}{1}
\monomino{1}{4}
\monomino{2}{1}
\monomino{3}{3}
\monomino{3}{4}
\monomino{4}{4}
\monomino{4}{1}
%horizontal dominos:
\horizontalDomino{2}{2}
\horizontalDomino{2}{3}
\horizontalDomino{3}{4}
%vertical dominos:
\verticalDomino{5}{2}
%special vertical dominos:
\specialDomino{3}{2}
\specialDomino{4}{3}
\specialDomino{5}{4}
%lattice path:
\draw[line width=\pathwidth,color=blue]
(0,0)
-- ++(2,0)
-- ++(0,2)
-- ++(1,0)
-- ++(0,1)
-- ++(1,0)
-- ++(0,1)
-- ++(1,0);
\end{tikzpicture}
\quad
\begin{tikzpicture}
%grid
\labeledGrid{5}{4}{Fibonacci}
%monominos
\monomino{1}{1}
\monomino{1}{4}
\monomino{2}{1}
\monomino{3}{3}
\monomino{3}{4}
\monomino{4}{4}
\monomino{4}{1}
%horizontal dominos:
\labeledHorizontalDomino{3}{4}{Fibonacci}
\labeledHorizontalDomino{2}{3}{Fibonacci}
\labeledHorizontalDomino{2}{2}{Fibonacci}
%special vertical dominos:
\labeledSpecialDomino{3}{2}{Fibonacci}
\labeledSpecialDomino{4}{3}{Fibonacci}
\labeledSpecialDomino{5}{4}{Fibonacci}
%vertical dominos:
\labeledVerticalDomino{5}{2}{Fibonacci}
%lattice path:
\draw[line width=\pathwidth,color=blue]
(0,0)
-- ++(2,0)
-- ++(0,2)
-- ++(1,0)
-- ++(0,1)
-- ++(1,0)
-- ++(0,1)
-- ++(1,0);
%special vertical dominos:
\labeledSpecialDominoLabeling{3}{2}{Fibonacci}
\labeledSpecialDominoLabeling{4}{3}{Fibonacci}
\labeledSpecialDominoLabeling{5}{4}{Fibonacci}

\end{tikzpicture}
	\caption{A path-domino tiling $T$ of a $5\times 4$ rectangle (left), and the $q$-Fibonacci weights of its tiles (right).
		The weight of the tiling is the product of the weights of its tiles,
		${\omega(T)=q^{1+2+6+3+10+24+5}=q^{51}}$.}
	\label{fig_pathdominotiling}
\end{figure}

\begin{Theorem}\label{thm_qFibonomial}
	For $m,n\in \N$, the $q$-analog of the Fibonomial number is a polynomial in $q$ with non-negative integer coefficients. It
	can be computed as
	\[
	\BigqFibonomial{m}{n} = \sum_{T\in \tilings{m}{n}} \omega(T).
	\]
\end{Theorem}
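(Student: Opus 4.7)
The plan is to induct on $m+n$. Dividing equation~\eqref{lem_qFibIdentities2} of Lemma~\ref{lem_qFibIdentities} by $[F_m]_q^!\,[F_n]_q^!$ yields the recurrence
\[
\BigqFibonomial{m}{n} = [F_{m+1}]_{q^{F_n}}\,\BigqFibonomial{m}{n-1} + q^{F_n F_{m+1}}\,[F_{n-1}]_{q^{F_m}}\,\BigqFibonomial{m-1}{n},
\]
with the base cases $m=0$ or $n=0$ trivial. Thus everything reduces to verifying that the combinatorial sum $\sum_{T \in \tilings{m}{n}} \omega(T)$ obeys the same recurrence. To do this I will split $\tilings{m}{n}$ according to whether the final step of the lattice path is vertical (U) or horizontal (R), and match each class with one summand.

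If the last step is a U step from $(m,n-1)$ to $(m,n)$, then the path lies entirely in the bottom $m \times (n-1)$ sub-rectangle and the top row of the original rectangle is above the path. That row is therefore tiled by monominos and horizontal dominos only, each horizontal domino with top-right corner $(i,n)$ contributing weight $q^{F_i F_n}$. Summing over tilings of this row is exactly Lemma~\ref{lem_q_analog_Fibonacci} applied after the substitution $q \mapsto q^{F_n}$, giving $[F_{m+1}]_{q^{F_n}}$. Because tile weights depend only on the coordinates of their top-right corners, what remains in rows $1,\dots,n-1$ is an honest element of $\tilings{m}{n-1}$ of the same $q$-weight, so the induction hypothesis supplies the first summand.

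If the last step is an R step from $(m-1,n)$ to $(m,n)$, then column $m$ lies entirely below the path, and the cell $(m,n)$ touches the path from below; it is therefore forced to be the top of a special vertical domino covering $(m,n)$ and $(m,n-1)$ with weight $q^{F_{m+1}F_n}$. The remaining $n-2$ cells of column $m$ carry no further touching-from-below constraint, since the path has no horizontal step inside column $m$ other than the final one; they form a vertical $(n-2)$-strip tiled by vertical dominos (weights $q^{F_m F_j}$) and monominos, and by Lemma~\ref{lem_q_analog_Fibonacci} with $q \mapsto q^{F_m}$ the sum of their weights is $[F_{n-1}]_{q^{F_m}}$. Deleting column $m$ then leaves a path-domino tiling in $\tilings{m-1}{n}$ of matching weight, and induction yields the second summand. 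Non-negativity of the coefficients follows automatically, as each $\omega(T)$ is a single monomial $q^k$ with $k \geq 0$.

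The step where the argument has real content is the Type-2 analysis: one must verify that forcing a special vertical domino at the top of column $m$ is always consistent, that no further touching-from-below constraints appear below it, and that restricting the configuration to columns $1,\dots,m-1$ produces a bona fide element of $\tilings{m-1}{n}$ (in particular, that the truncated lattice path still satisfies the path-domino conditions). These are essentially geometric verifications, but they are the crux of matching the combinatorial decomposition to the algebraic recurrence from Lemma~\ref{lem_qFibIdentities}.
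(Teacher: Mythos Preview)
Your proof is correct and follows essentially the same route as the paper: derive the recurrence
\[
\BigqFibonomial{m}{n} = [F_{m+1}]_{q^{F_n}}\,\BigqFibonomial{m}{n-1} + q^{F_n F_{m+1}}\,[F_{n-1}]_{q^{F_m}}\,\BigqFibonomial{m-1}{n}
\]
from Lemma~\ref{lem_qFibIdentities}, then match it combinatorially by splitting $\tilings{m}{n}$ according to whether the path's final step is north or east and invoking Lemma~\ref{lem_q_analog_Fibonacci} for the row/column contribution. The only cosmetic difference is that the paper takes $m=1$ or $n=1$ as base cases (verified directly via Lemma~\ref{lem_q_analog_Fibonacci}), whereas you push the induction down to $m=0$ or $n=0$; since the paper sets $\N=\{1,2,\dots\}$, your version implicitly extends the definitions of $\qFibonomial{m}{n}$ and $\tilings{m}{n}$ to allow a zero argument, which is harmless but should be stated.
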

\begin{proof}Let us start proving the result for the initial cases $m=1$ or $n=1$.

For $n=1$, we have $\qFibonomial{m}{1}=[F_{m+1}]_q$. The collection $\tilings{m}{1}$ coincides with the tilings of an $m$-strip with dominos and monominos, since only the last step of the specified lattice path can be a north step because of the special vertical domino condition.
	The weight of a domino in a~tiling, whose top-right corner has coordinate $(i,1)$, is $q^{F_iF_1}=q^{F_i}$.
	Therefore, the result follows from Lemma~\ref{lem_q_analog_Fibonacci}.
	
	For $m=1$, we have $\qFibonomial{1}{n}=[F_{n+1}]_q$.
	The collection $\tilings{1}{n}$ can be identified with the collection of tilings of a vertical $n$-strip with dominos and monominos, where the topmost domino has a~special weight.
	The weight of a usual vertical domino, whose top-right corner has coordinate~$(1,j)$, is $q^{F_1F_j}=q^{F_j}$, while
	the weight of a special vertical domino located at the same place is $q^{F_{1+1}F_j}=q^{F_j}$.
	Therefore, the result also follows from Lemma~\ref{lem_q_analog_Fibonacci}.
	
	Now assume the result holds when $m=1$ or $n=1$.
	Letting $m,n>1$ and using equation~\eqref{lem_qFibIdentities2} in the following equation we obtain
	\begin{align*}
	\BigqFibonomial{m}{n}
	&=
	\frac{[F_{m+n}]_q\qFibofactorial{m+n-1}}{\qFibofactorial{m}\cdot \qFibofactorial{n}} \\
	&=
	[F_{m+1}]_{q^{F_n}}\frac{\qFibofactorial{m+n-1}}{\qFibofactorial{m}\cdot \qFibofactorial{n-1}}
	+ q^{F_nF_{m+1}}[F_{n-1}]_{q^{F_m}} \frac{\qFibofactorial{m+n-1}}{\qFibofactorial{m-1}\cdot \qFibofactorial{n}} \\
	&=
	[F_{m+1}]_{q^{F_n}}\BigqFibonomial{m}{n-1}
	+ q^{F_nF_{m+1}}[F_{n-1}]_{q^{F_m}} \BigqFibonomial{m-1}{n}.
	\end{align*}
	By induction (and using again Lemma~\ref{lem_q_analog_Fibonacci}), the first term of the sum is
	the weighted counting of the path-domino tilings of the $m\times n$ rectangle whose specified path ends with a north step,
	while the second term is the weighted counting of those finishing with an east step. Indeed, the path-domino tilings whose path ends with a north step have an extra contribution $[F_{m+1}]_{q^{F_n}}$
	corresponding to the tilings of the last row with horizontal dominos and monominos. The path-domino tilings whose path
	ends with an east step have an extra contribution $q^{F_nF_{m+1}}[F_{n-1}]_{q^{F_m}}$; this corresponds to the weight of the forced special vertical domino ($q^{F_nF_{m+1}}$)
	and the tilings of the remaining $(n-2)$-strip in the last column ($[F_{n-1}]_{q^{F_m}}$).
\end{proof}

We have checked the following conjecture by computer for $m,n \leq 10$:
\begin{Conjecture} \label{conj:unimodal}
	The polynomials $\qFibonomial{m}{n}$ are unimodal.
\end{Conjecture}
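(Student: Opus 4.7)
The plan is to exploit a symmetry of the polynomials $\qFibonomial{m}{n}$ that is not made explicit in the statement: they are palindromic. Each factor $[F_k]_q$ in $\qFibofactorial{n}$ is a palindromic polynomial, so $\qFibofactorial{n}$ is palindromic as a product of palindromes, and a polynomial quotient of two palindromic polynomials is again palindromic. Writing $D_{m,n} = F_{m+n+2}-F_{m+2}-F_{n+2}+1$ for the degree, unimodality therefore reduces to showing that the coefficient sequence is weakly increasing on $\{0, 1, \dots, \lfloor D_{m,n}/2 \rfloor\}$.

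With palindromicity in hand, the two natural routes are the combinatorial one pioneered by K.~O'Hara for the Gaussian binomial coefficients, and the algebraic one going back to Sylvester. For the combinatorial route I would attempt to construct a symmetric chain decomposition of $\tilings{m}{n}$: partition path-domino tilings into chains $T_1, T_2, \dots, T_r$ such that $\omega(T_{i+1}) = \omega(T_i)+1$ for every $i$ and $\omega(T_1)+\omega(T_r) = D_{m,n}$. For the algebraic route one would search for a graded vector space carrying an $\mathfrak{sl}_2$-action (equivalently, a Lefschetz element) whose Hilbert series equals $\qFibonomial{m}{n}$; unimodality then follows from the representation theory of $\mathfrak{sl}_2$. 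A useful intermediate target, worth attacking first, is log-concavity: combined with the absence of internal zeros (which should be accessible by exhibiting a path-domino tiling of each intermediate weight), log-concavity already implies unimodality. One could test whether $\qFibonomial{m}{n}$ is Lorentzian in the sense of Br\"and\'en--Huh, or try to deduce the inequality $c_i^2 \geq c_{i-1} c_{i+1}$ inductively from the recursion used in the proof of Theorem~\ref{thm_qFibonomial}.

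The main obstacle in any of these approaches is the non-uniformity of the tile weights: a single domino contributes $q^{F_i F_j}$, an exponent that grows roughly exponentially in $i+j$. Consequently, the simplest local moves on a tiling (sliding or swapping a single domino) change $\omega$ by a Fibonacci-sized quantity rather than by one, so the ``unit cover relations'' on which symmetric chain decompositions rely are not built in. A successful combinatorial attack will likely require compound moves that telescope many individual Fibonacci increments into a single $+1$ increment; devising such moves is, I expect, the technical heart of the problem. The algebraic route faces a parallel difficulty: unlike the Gaussian binomials, there is no obvious Grassmannian-like geometric object whose cohomology realises $\qFibonomial{m}{n}$, so even the existence of the sought-after Lefschetz structure is unclear, and one would first need a genuinely new construction --- perhaps a Fibonacci analogue of a Schubert-like cell decomposition --- before the $\mathfrak{sl}_2$-machinery can be applied.
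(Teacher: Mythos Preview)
The statement you are addressing is a \emph{Conjecture} in the paper, not a theorem: the authors explicitly say only that they ``have checked the following conjecture by computer for $m,n\le 10$''. There is no proof in the paper to compare your attempt against.

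Correspondingly, your submission is not a proof either --- and you essentially say so yourself. What you have written is a well-organised research plan: you correctly observe that each $[F_k]_q$ is palindromic, hence so is $\qFibonomial{m}{n}$ (and your degree formula $D_{m,n}=F_{m+n+2}-F_{m+2}-F_{n+2}+1$ is right), and you then sketch three standard strategies (symmetric chain decomposition \`a la O'Hara, an $\mathfrak{sl}_2$/Lefschetz realisation, log-concavity plus no internal zeros). But none of these is carried out; in each case you end by naming the obstacle rather than overcoming it. In particular, the key technical difficulty you identify --- that a single local domino move changes the exponent by a Fibonacci-sized quantity $F_iF_j$ rather than by $1$, so there are no obvious ``unit'' cover relations on $\tilings{m}{n}$ --- is real, and nothing in your text gets past it.

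So the honest assessment is: your palindromicity reduction is a correct and useful first step, and your discussion of approaches and obstructions is sensible, but as it stands there is a genuine gap --- namely, the entire argument. If you want to pursue this, the most concrete near-term target in your outline is probably the ``no internal zeros'' claim (exhibit a tiling of every weight between $0$ and $D_{m,n}$), since that at least is a purely combinatorial statement about $\tilings{m}{n}$ that does not require new algebraic machinery.
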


\begin{figure}[t]	\centering
%\begin{subfigure}[b]{.15\linewidth}
\begin{tikzpicture}
\grid{2}{2}
\monomino{1}{1}
\monomino{2}{1}
\horizontalDomino{2}{2}
\draw[line width=\pathwidth,color=blue] (0,0) -- ++(2,0) -- ++(0,2);
\node at (1,-0.5) {$q$};
\end{tikzpicture}
%\end{subfigure}
\quad
%\begin{subfigure}[b]{.15\linewidth}
\begin{tikzpicture}
\grid{2}{2}
\monomino{1}{2}
\monomino{2}{2}
\horizontalDomino{2}{1}
\draw[line width=\pathwidth,color=blue] (0,0) -- ++(2,0) -- ++(0,2);
\node at (1,-0.5) {$q$};
\end{tikzpicture}
%\end{subfigure}
\quad
%\begin{subfigure}[b]{.15\linewidth}
\begin{tikzpicture}
\grid{2}{2}
\horizontalDomino{2}{1}
\horizontalDomino{2}{2}
\draw[line width=\pathwidth,color=blue] (0,0) -- ++(2,0) -- ++(0,2);
\node at (1,-0.5) {$q^2$};
\end{tikzpicture}
%\end{subfigure}
\quad
%\begin{subfigure}[b]{.15\linewidth}
\begin{tikzpicture}
\grid{2}{2}
\monomino{1}{1}
\monomino{1}{2}
\specialDomino{2}{2}
\draw[line width=\pathwidth,color=blue] (0,0) -- ++(1,0) -- ++(0,2) -- ++(1,0);
\node at (1,-0.5) {$q^2$};
\end{tikzpicture}
%\end{subfigure}
\quad
%\begin{subfigure}[b]{.15\linewidth}
\begin{tikzpicture}
\grid{2}{2}
\specialDomino{1}{2}
\specialDomino{2}{2}
\draw[line width=\pathwidth,color=blue] (0,0) -- ++(0,2) -- ++(2,0);
\node at (1,-0.5) {$q^3$};
\end{tikzpicture}
%\end{subfigure}
	\caption{The six path-domino tilings of a $2\times 2$ rectangle, and their contribution to the $q$-Fibonomial
		$\protect\qFibonomial{2}{2}=\frac{[F_4][F_3]}{[F_2][F_1]}=\frac{[3][2]}{[1][1]}=1+2q+2q^2+q^3$
		when computed as a generating function $\sum\limits_{T \in \tilings{2}{2}} \omega(T)$.}
	\label{fig_pathdominotilings22}
\end{figure}

\begin{Example}
	Fig.~\ref{fig_pathdominotilings22} illustrates an example of the $q$-Fibonomial for $m=n=2$.
\end{Example}

\begin{Example}[$n=2$]\label{ex_qFibSpiral}
	Let $m\in \N$ and $n=2$. Theorem~\ref{thm_qFibonomial} leads to the identity
	\begin{gather} \label{identity_qFibSpiral}
	[F_{m+2}][F_{m+1}] = \sum_{k=1}^{m+1} q^{c_k^m} [F_k]^2,
	\end{gather}
	where
	$
	c_k^m=\sum\limits_{i=k}^m F_{i+1}$.

	The left hand side comes from the equality $\qFibonomial{m}{2}=[F_{m+2}][F_{m+1}]$.
	The right hand side is the sum of the weights of all path-domino tilings of an $m\times 2$ rectangle.
	In fact, the term $q^{c_k^m} [F_k]^2$ indicates the sum of the weights of the path-domino tilings whose
	specified path is $E^{k-1}N^2E^{m-(k-1)}$: $q^{c_k^m}$ is the product of the weights of the special vertical dominos, and $[F_k]^2$
	is the weight of the two horizontal rows above the path. Since there are no more possibilities for the specified path due to the special vertical domino condition,
	the identity \eqref{identity_qFibSpiral} follows.
	The evaluation at $q=1$ recovers
	\begin{align} \label{identity_FibSpiral}
	F_{m+2}F_{m+1} = \sum_{k=1}^{m+1} F_k^2.
	\end{align}	
\end{Example}
\begin{Remark}
	Equation~\eqref{identity_FibSpiral} is a well known identity due to its relation with the golden ratio and golden spirals in nature, see for instance~\cite{goldenRationFibonacciBook,fabulousFibonacciBook}. The left hand side of the equation is the area of a $F_{m+2}\times F_{m+1}$ rectangle, which can be subdivided into a sequence of squares, with side lengths $F_1,F_2,\dots , F_{m+1}$, forming a spiral as illustrated in Fig.~\ref{fig_FibonacciSpiral}~(left). This Fibonacci spiral is an approximation of the golden spiral, a special case of logarithmic spirals which describe the shape of various natural phenomena such as galaxies, nautilus shells and hurricanes. On the other hand, equation~\eqref{identity_qFibSpiral} also has a natural geometric interpretation. The left hand side represents the weighted area of a $F_{m+2}\times F_{m+1}$ rectangle, where a unit square whose bottom-left corner is located at $(i,j)$ has weight $q^{i+j}$. This rectangle can be subdivided into a sequence of squares, with side lengths $F_1,F_2,\dots , F_{m+1}$, in the north-east direction as illustrated in Fig.~\ref{fig_FibonacciSpiral}~(right). The sum of their weighted areas is exactly the right hand side of equation~\eqref{identity_qFibSpiral}. This sum can also be interpreted as the ``mass'' of the rectangle, where the $F_k$-square has density $d(F_k)=q^{c_k^m}$. This density increases according to the ratio $\frac{d(F_k)}{d(F_{k+1})}=q^{F_{k+1}}$, satisfying the initial condition $d(F_{m+1})=1$.
	It would be interesting to assign these densities to the squares giving rise to the Fibonacci spiral on Fig.~\ref{fig_FibonacciSpiral}~(left), and see if the resulting equation has some physical meaning. Or even more interesting, to have a continuous version of the equation representing the mass of the Fibonacci spiral (or golden spiral), in order to describe some physical phenomenon in nature. For instance, it is quite natural to think that the density of galaxies grows exponentially as it approaches the center of the spiral. In Example~\ref{ex_ellFibSpiral}, we additionally provide a generalization of equation~\eqref{identity_qFibSpiral} using elliptic weight functions.
\end{Remark}	

\begin{figure}[t]	\centering
\begin{tikzpicture}[scale=0.3]
%\draw[dotted,thin, lightgray] (-6,-3) grid (15,10);
%squares
\fiboSquare{-1}{0}{1}
\fiboSquare{-1}{1}{1}
\fiboSquare{0}{0}{2}
\fiboSquare{-1}{-3}{3}
\fiboSquare{-6}{-3}{5}
\fiboSquare{-6}{2}{8}
\fiboSquare{2}{-3}{13}
%spiral approximation
\draw[color=blue, thick] (0,0) arc[start angle=270, end angle=180, radius=1] arc[start angle=180, end angle=90, radius=1] arc[start angle=90, end angle=0, radius=2] arc[start angle=0, end angle=-90, radius=3] arc[start angle=270, end angle=180, radius=5] arc[start angle=180, end angle=90, radius=8] arc[start angle=90, end angle=0, radius=13];

\end{tikzpicture} \quad
\begin{tikzpicture}[scale=0.3]
	\draw[thin, lightgray] (-6,-3) grid (15,10);
	\fiboSquare{-6}{-3}{13}
	\fiboSquare{7}{-3}{8}
	\fiboSquare{7}{5}{5}
	\fiboSquare{12}{5}{3}
	\fiboSquare{12}{8}{2}
	\fiboSquare{14}{8}{1}
	\fiboSquare{14}{9}{1}
\end{tikzpicture}
	\caption{Geometric interpretation of equations~\eqref{identity_qFibSpiral} and~\eqref{identity_FibSpiral}.}	\label{fig_FibonacciSpiral}
\end{figure}
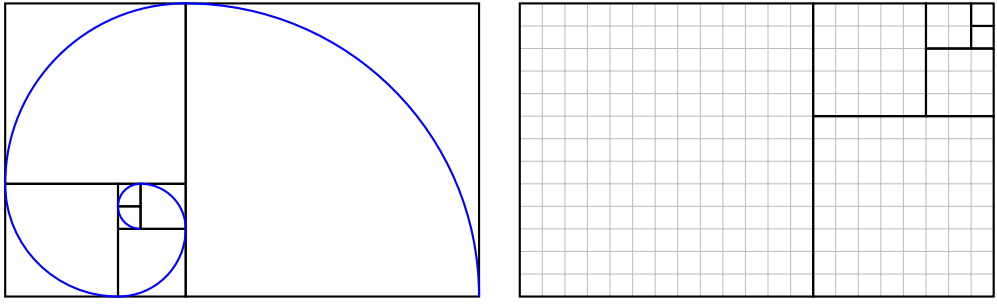

\section{Elliptic analog of the Fibonomial numbers}\label{sec_elliptic}

As in the previous section, the elliptic analog of the Fibonomial number is obtained
by replacing each term in the binomial coefficient by its corresponding Fibonacci ``elliptic number''.
The elliptic number used here is a slight modification of the elliptic number introduced by Schlosser and Yoo in~\cite{elliptic_schlosser_2017}, motivated by work of Schlosser on
elliptic binomial coefficients~\cite{noncommutative_schlosser_2011}.
The elliptic number is an elliptic function that generalizes the $q$-analog of a number, and plays an important role in the theory of hypergeometric series and special functions. Over the last years, elliptic enumeration of combinatorial models and elliptic generalization of special combinatorial numbers is becoming an active research area due to its natural appearance in the theory of hypergeometric functions, see, e.g., \cite{q_borodin_2010, elliptic_schlosser_2007, elliptic_schlosser_2017}.
In this section, we contribute to this general program by studying an elliptic generalization of the Fibonomial number.
Before defining the elliptic analog, we introduce some basic notations and definitions concerning elliptic functions.

An \emph{elliptic function} is a function defined over the complex numbers that is meromorphic and doubly periodic.
It is well known (cf., e.g., \cite{elliptic_rosengren_2016,elliptische_weber_1891}) that elliptic functions can be obtained as quotients of \emph{modified Jacobi theta functions}.
These are defined as
\[
\theta(x;p) := \prod_{j\geq 0}
\left(
\big(1-p^jx\big)\left(1-\frac{p^{j+1}}{x}\right)
\right),
\qquad
\theta(x_1,\dots, x_\ell;p) =
\prod_{k=1}^\ell \theta(x_k;p),
\]
where $x,x_1,\dots,x_\ell \neq 0$ and $|p|<1$.
The \emph{elliptic analog} of a natural number $n\in \N$ (or simply \emph{elliptic number}) is defined as
\begin{gather*}
\elliptic{n} :=
\frac
{\theta\big(q^n,aq^n,bq,\frac{a}{b}q;p\big)}
{\theta\big(q,aq,bq^{n},\frac{a}{b}q^{n};p\big)}.
\end{gather*}
Here, $a$, $b$ are two additional parameters. This definition follows the definition in \cite{logconcav_schlosser_2020} and corresponds to the definition in \cite{elliptic_schlosser_2017} subject to the substitution $b \mapsto bq^{-1}$.\footnote{We are grateful to an anonymous referee for suggesting this substitution. This edition made many of our formulas look simpler and more symmetric.} The elliptic number is indeed an elliptic function in its parameters~\cite[Remark~4]{elliptic_schlosser_2017}.
Taking the limit $p\rightarrow 0$, then $a\rightarrow 0$ and then $b\rightarrow 0$, one recovers the $q$-analog $[n]_q$. We simply define the \emph{elliptic analog of the Fibonacci number} $F_n$ by $[F_n]_{a,b;q,p}$.

For $m,n\in \N$, the \emph{elliptic analog of the Fibonomial number} is defined as
\begin{gather*}
\BigellipticFibonomial{m}{n} := \frac{\ellipticFiboFactorial{m+n}}{\ellipticFiboFactorial{m}\cdot \ellipticFiboFactorial{n}},
\end{gather*}
where
$\ellipticFiboFactorial{n}:= \prod\limits_{k=1}^n \elliptic{F_k}$
is the elliptic Fibonacci analog of $n!$.

Similarly as before, the elliptic Fibonomial number counts path-domino tilings of an $m\times n$ rectangle
according to certain elliptic weights.
For $T\in \tilings{m}{n}$, the \emph{elliptic weights} of the possible tiles in $T$ are defined as follows
\begin{center}
 $\eweight \left( \,
\begin{tikzpicture}[x=\boxsize,y=\boxsize,baseline=-4.5mm]
\monomino{0}{0}
\end{tikzpicture}
\, \right) = 1$,
\quad
 $\eweight \left( \,
\begin{tikzpicture}[x=\boxsize,y=\boxsize,baseline=-4.5mm]
\horizontalDomino{0}{0}
\end{tikzpicture}
\, \right) = \omega_1(i,j)$,
\quad
 $\eweight \left( \,
\begin{tikzpicture}[x=\boxsize,y=\boxsize,baseline=-8mm]
\verticalDomino{0}{0}
\end{tikzpicture}
\, \right) = \omega_1(j,i)$,
\quad
 $\eweight \left( \,
\begin{tikzpicture}[x=\boxsize,y=\boxsize,baseline=-8mm]
\specialDomino{0}{0}
\end{tikzpicture}
\, \right) = \omega_2(i,j)$,
\end{center}

\noindent
where $(i,j)$ denotes the coordinate of the top-right corner of the tile, the shaded
vertical domino represents a special vertical domino touching the path from below, and
\begin{gather*}
\omega_1(i,j) := v_{a,b;q^{F_j},p}(F_i,F_{i-1}),\\ % \label{def_ellweights1} \\
\omega_2(i,j) := v_{a,b;q,p}(F_{i+1}F_{j},F_{i}F_{j-1}) % \label{def_ellweights2}
\end{gather*}
are defined in terms of the following expression
\begin{gather*}
v_{a,b;q,p}(m,n) := \frac{\theta\big(aq^{2m+n},b,bq^{n},\frac{a}{b}q^{n},\frac{a}{b};p\big)} {\theta\big(aq^{n},bq^{m},bq^{m+n},\frac{a}{b}q^{m},\frac{a}{b}q^{m+n};p\big)}q^m.
\end{gather*}

The elliptic weight function $v_{a,b;q,p}(m,n)$ comes from equation~\eqref{lem_ellidentities1} below and is the missing factor in that equation. Note that the weight of a ``regular'' vertical domino is evaluated at $(j,i)$ instead of $(i,j)$. This transposition
does not make any difference for the $q$-analog of the Fibonomial numbers, but it does for the elliptic case.
The \emph{elliptic weight~$\eweight(T)$} of~$T$ is defined as the product of the weights of its tiles; see an example in Fig.~\ref{fig_tilesellipticweights}. The elliptic weight is a~generalization of the $q$-weight, since we obtain the $q$-weight by taking the limit $p\rightarrow 0$, $a\rightarrow 0$ and $b\rightarrow 0$ in this order.

\begin{figure}[t]	\centering
\begin{tikzpicture}[scale=1.1]
%grid
\ordinaryLabeledGrid{5}{4}

%monominos
\monomino{1}{1}
\monomino{1}{4}
\monomino{2}{1}
\monomino{3}{3}
\monomino{3}{4}
\monomino{4}{4}
\monomino{4}{1}
%horizontal dominos:
\ellipticLabeledHorizontalDomino{3}{4}
\ellipticLabeledHorizontalDomino{2}{3}
\ellipticLabeledHorizontalDomino{2}{2}
%special vertical dominos:
\ellipticLabeledSpecialDomino{5}{4}
\ellipticLabeledSpecialDomino{4}{3}
\ellipticLabeledSpecialDomino{3}{2}
%vertical dominos:
\ellipticLabeledVerticalDomino{5}{2}
%lattice path:
\draw[line width=\pathwidth,color=blue]
(0,0)
-- ++(2,0)
-- ++(0,2)
-- ++(1,0)
-- ++(0,1)
-- ++(1,0)
-- ++(0,1)
-- ++(1,0);
%special vertical dominos labeling:
\ellipticLabeledSpecialDominoLabeling{5}{4}
\ellipticLabeledSpecialDominoLabeling{4}{3}
\ellipticLabeledSpecialDominoLabeling{3}{2}

\end{tikzpicture}

	\caption{The path-domino tiling $T$ from Fig.~\ref{fig_pathdominotiling} with the elliptic Fibonacci weights of its tiles.
		The weight of the tiling is the product of the weights of its tiles.}\label{fig_tilesellipticweights}
\end{figure}
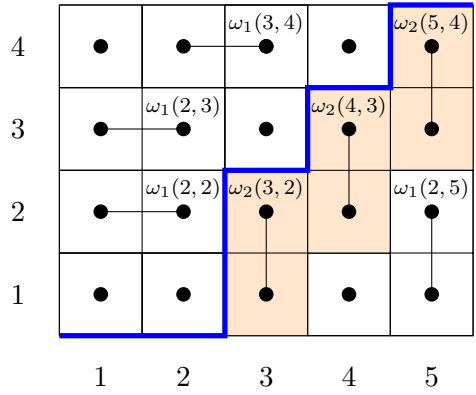

\begin{Theorem}\label{thm_ellipticFibonomial}
	For $m,n\in \N$, the elliptic analog of the Fibonomial number can be computed as
	\[
	\BigellipticFibonomial{m}{n} = \sum_{T\in \tilings{m}{n}} \eweight(T).
	\]
\end{Theorem}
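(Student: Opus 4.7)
The plan is to mirror the inductive proof of Theorem~\ref{thm_qFibonomial}, replacing the $q$-Fibonacci addition formula \eqref{lem_qFibIdentities2} with its elliptic counterpart (equation \eqref{lem_ellidentities1} below) in which the ``missing factor'' $v_{a,b;q,p}$ appears. As a preliminary tool I would first establish an elliptic analog of Lemma~\ref{lem_q_analog_Fibonacci}, namely that $\elliptic{F_n}$ equals the sum over tilings $T$ of an $(n-1)$-strip of $\prod_t v_{a,b;q,p}(F_{i_t},F_{i_t-1})$, where $i_t$ is the top-right horizontal coordinate of the domino $t$. This would follow by a direct induction on $n$ using the single-variable form of the elliptic addition formula, in which the two summands correspond to tilings ending in a monomino versus those ending in a domino, exactly as in the $q$-case.

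Next I would dispose of the base cases $m=1$ and $n=1$. When $n=1$, the path-domino tilings in $\tilings{m}{1}$ correspond to tilings of an $m$-strip (only the last step of the path can be vertical, due to the special-vertical-domino condition), and the horizontal-domino weights $\omega_1(i,1) = v_{a,b;q^{F_1},p}(F_i,F_{i-1}) = v_{a,b;q,p}(F_i,F_{i-1})$ match the strip identity above, giving $\BigellipticFibonomial{m}{1} = \elliptic{F_{m+1}}$. The case $m=1$ is analogous, with the topmost special vertical domino weight $\omega_2(1,j)$ reducing to the same strip ingredient.

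For the inductive step with $m,n\geq 2$, I would apply the elliptic addition formula to the ratio $\elliptic{F_{m+n}}\cdot \ellipticFiboFactorial{m+n-1}/(\ellipticFiboFactorial{m}\ellipticFiboFactorial{n})$ to obtain
\[
\BigellipticFibonomial{m}{n} \;=\; A_{m,n}\,\BigellipticFibonomial{m}{n-1} \;+\; B_{m,n}\,\BigellipticFibonomial{m-1}{n},
\]
where $A_{m,n}$ and $B_{m,n}$ are explicit factors coming from the missing-factor identity. By induction these two smaller Fibonomials are already sums of elliptic weights over $\tilings{m}{n-1}$ and $\tilings{m-1}{n}$, respectively. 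The task is then to recognize $A_{m,n}$ as the total weighted contribution of the topmost row of horizontal dominos and monominos on tilings whose path ends with a north step (using the strip identity with base $q^{F_n}$, which matches $\omega_1(i,n) = v_{a,b;q^{F_n},p}(F_i,F_{i-1})$ exactly), and to recognize $B_{m,n}$ as the weight $\omega_2(m,n) = v_{a,b;q,p}(F_{m+1}F_n,F_mF_{n-1})$ of the forced special vertical domino in the top-right corner times the weighted tilings of the remaining $(n-2)$-strip in the last column (via the strip identity at base $q^{F_m}$, matching $\omega_1(j,m)$).

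The principal obstacle lies in this last matching. In the $q$-analog the identification was immediate because $q$-exponents are additive, so $[F_{m+1}]_{q^{F_n}}$ decomposed automatically as a product over tiles; here one must verify that the theta-function-based expressions $A_{m,n}$ and $B_{m,n}$ are genuinely the products of $\omega_1$- and $\omega_2$-weights, and not merely equal to them modulo some telescoping. This is precisely what the definition of $v_{a,b;q,p}(m,n)$, with its carefully chosen arguments $(F_{i+1}F_j,F_iF_{j-1})$ in $\omega_2$ and base-shift $q\mapsto q^{F_j}$ in $\omega_1$, is engineered to deliver. Once this compatibility is confirmed, the induction closes and the theorem follows.
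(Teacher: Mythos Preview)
Your proposal is correct and follows essentially the same route as the paper: an elliptic strip-tiling lemma (the paper's Lemma~\ref{lem_ellanalog_Fibonacci}), the base cases $m=1$ and $n=1$ via $\omega_2(1,j)=\omega_1(j,1)$, and the inductive recursion obtained from the elliptic addition formula (the paper's Lemma~\ref{lem_ellFibIdentities}), with $A_{m,n}=[F_{m+1}]_{a,b;q^{F_n},p}$ and $B_{m,n}=\omega_2(m,n)\,[F_{n-1}]_{a,b;q^{F_m},p}$ identified exactly as you describe. Your ``principal obstacle'' is not actually an obstacle: since $\omega_1(i,j)=v_{a,b;q^{F_j},p}(F_i,F_{i-1})$ by definition, the strip identity at base $q^{F_n}$ (respectively $q^{F_m}$) gives the row (respectively column) contribution directly, with no further theta-function manipulation required.
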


The proof of this theorem follows the same steps as the proof of Theorem~\ref{thm_qFibonomial}.
The proofs of the technical lemmas and examples use some basic properties of theta functions summarized in the following proposition, which are essential in the theory of elliptic hypergeometric series.

\begin{Proposition}[{cf.\ \cite[p.~451, Example~5]{elliptische_weber_1891}}]
	\label{prop_thetaBasicProperties}
	The theta function satisfies the following basic properties
	\begin{gather}
	\theta(x;0) = 1-x, \nonumber\\ %\label{lem_thetaBasicProperties0} \\
	\theta\big(\tfrac{1}{x};p\big) = -\tfrac{1}{x} \theta(x;p), \nonumber\\
	\theta(px;p) = -\tfrac{1}{x} \theta(x;p),\nonumber \\
	\theta\big(xy,\tfrac{x}{y},uz,\tfrac{u}{z};p\big)=
	\theta\big(uy,\tfrac{u}{y},xz,\tfrac{x}{z};p\big)+
	\tfrac{x}{z}
	\theta\big(zy,\tfrac{z}{y},ux,\tfrac{u}{x};p\big). \label{lem_thetaBasicProperties2}
	\end{gather}
\end{Proposition}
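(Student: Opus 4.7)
The plan is to handle the four identities separately: the first three are straightforward manipulations of the defining infinite product, while the fourth is the classical Riemann/Weierstrass three-term theta relation and requires a Liouville-style argument on the torus.

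For (1), setting $p=0$ in $\theta(x;p) = \prod_{j\geq 0}(1 - p^j x)(1 - p^{j+1}/x)$ kills every factor except the single $(1-x)$ coming from $j=0$ in the first product. For (2), I would write out $\theta(1/x;p) = \prod_{j\geq 0}(1 - p^j/x)(1 - p^{j+1} x)$ and re-index: the factors $(1 - p^{j+1} x)_{j \geq 0}$ match $(1 - p^j x)_{j \geq 1}$, and the factors $(1 - p^j/x)_{j \geq 1}$ match $(1 - p^{j+1}/x)_{j \geq 0}$, leaving the unpaired $(1 - 1/x)$ on one side and $(1 - x)$ on the other, whose ratio is $-1/x$. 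The proof of (3) is essentially identical, with $(1 - p^{j+1} x)_{j\geq 0}$ and $(1 - p^j/x)_{j \geq 0}$ producing the same unpaired factor $1 - 1/x$ against $1-x$.

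For (4), I set $g(x) := \theta(xy,x/y,uz,u/z;p) - \theta(uy,u/y,xz,x/z;p) - (x/z)\theta(zy,z/y,ux,u/x;p)$ and view it as a meromorphic function of $x$ with $y,z,u$ treated as parameters. Using (3) on each factor whose argument is linear in $x$, a direct computation shows that all three summands satisfy $f(px) = x^{-2} f(x)$, so $g(px) = x^{-2} g(x)$. Since $g$ is holomorphic on $\mathbb{C}^*$, a standard Liouville-type argument on the fundamental annulus $\{|p| \leq |x| < 1\}$ shows that either $g \equiv 0$ or $g$ has exactly two zeros (with multiplicity) per annulus. It therefore suffices to exhibit three zeros of $g$ lying in distinct $p$-orbits.

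The three natural test points are $x=y$, $x=z$, and $x=1/z$. At $x=y$ the LHS contains $\theta(1;p) = 0$, and on the RHS the common factor $\theta(uy, u/y;p)$ can be pulled out of both terms, leaving the bracket $\theta(yz,y/z;p) + (y/z)\theta(zy,z/y;p)$; applying (2) to $\theta(z/y;p)$ collapses this to zero. At $x=z$ the second summand of the RHS has $\theta(1;p)=0$ and the third reduces to exactly the first. At $x=1/z$ the second summand again vanishes, and properties (2) and (3) applied to $\theta(y/z;p)$ and $\theta(1/(yz);p)$ transform the first summand into the third. The main obstacle is the sign and prefactor bookkeeping in these three evaluations and in the $x \mapsto px$ transformation law, together with verifying that the chosen zeros lie in genuinely distinct $p$-orbits for generic parameters; once this is in place the Liouville argument forces $g \equiv 0$ generically, and the identity extends by meromorphic continuation in $y,z,u$ to all admissible values.
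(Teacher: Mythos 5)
The paper does not prove this proposition at all: it is invoked as a classical fact, with the citation to Weber (p.~451, Example~5) standing in place of a proof, and of the four properties only the addition formula \eqref{lem_thetaBasicProperties2} is actually used later (in the proof of Lemma~\ref{lem_ellidentities}). Your proposal therefore goes beyond the paper rather than diverging from it, and the argument you sketch is the standard one; the details you flag as bookkeeping do close. The re-indexings for the first three identities are exactly right, the unpaired ratio being $\big(1-\tfrac{1}{x}\big)/(1-x)=-\tfrac{1}{x}$ in both cases. For the three-term relation, each summand of $g$ indeed satisfies $f(px)=x^{-2}f(x)$: for instance in the third summand the prefactor becomes $px/z$, while $\theta(pux;p)=-\tfrac{1}{ux}\,\theta(ux;p)$ and $\theta\big(\tfrac{u}{px};p\big)=-\tfrac{u}{px}\,\theta\big(\tfrac{u}{x};p\big)$, and the three corrections multiply to $x^{-2}$. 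The three test evaluations also work out: at $x=z$ the third summand literally equals the first while the second contains $\theta(1;p)=0$; at $x=y$ the bracket $\theta\big(yz,\tfrac{y}{z};p\big)+\tfrac{y}{z}\theta\big(zy,\tfrac{z}{y};p\big)$ vanishes by the inversion identity $\theta\big(\tfrac{y}{z};p\big)=-\tfrac{y}{z}\theta\big(\tfrac{z}{y};p\big)$; and at $x=1/z$ only the inversion identity is needed, not property (3) as you suggest: $\theta\big(\tfrac{y}{z},\tfrac{1}{yz};p\big)=z^{-2}\,\theta\big(\tfrac{z}{y},yz;p\big)$, matching the prefactor $x/z=z^{-2}$ of the third summand. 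Since $g$ is holomorphic on $\mathbb{C}^{*}$ and a nonzero holomorphic function with $g(px)=x^{-2}g(x)$ has exactly two zeros per fundamental annulus (argument principle over a boundary chosen free of zeros), the three zeros $y$, $z$, $1/z$ --- which lie in pairwise distinct $p$-orbits provided $y\notin z^{\pm 1}p^{\mathbb{Z}}$ and $z^{2}\notin p^{\mathbb{Z}}$ --- force $g\equiv 0$, and holomorphy in $(y,z,u)\in(\mathbb{C}^{*})^{3}$ removes the genericity restriction by continuity. What the paper's citation buys is brevity and focus on the combinatorial content; what your proof buys is self-containedness, at the cost of a page of classical function theory.
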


Before proving Theorem \ref{thm_ellipticFibonomial}, let us again prove some straightforward lemmas:
\begin{Lemma} \label{lem_ellidentities}
	For $m,n\in \N$, the following identities hold
	\begin{gather}
	[m+n]_{a,b;q,p} = [m]_{a,b;q,p} + v_{a,b;q,p}(m,n) [n]_{a,b;q,p}, \label{lem_ellidentities1} \\
	[m\cdot n]_{a,b;q,p} = [m]_{a,b;q,p} [n]_{a,b;q^m,p}. \label{lem_ellidentities2}
	\end{gather}
\end{Lemma}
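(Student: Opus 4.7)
The plan is to verify the two identities in turn, using the definitions and the theta-function identities collected in Proposition~\ref{prop_thetaBasicProperties}.

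For the multiplicative identity~\eqref{lem_ellidentities2}, the approach is direct substitution. Writing
\[
\elliptic{m}\cdot\ellipticPar{n}{a,b;q^m,p} = \frac{\theta\big(q^m, aq^m, bq, \tfrac{a}{b}q; p\big)}{\theta\big(q, aq, bq^m, \tfrac{a}{b}q^m; p\big)} \cdot \frac{\theta\big(q^{mn}, aq^{mn}, bq^m, \tfrac{a}{b}q^m; p\big)}{\theta\big(q^m, aq^m, bq^{mn}, \tfrac{a}{b}q^{mn}; p\big)},
\]
the common factors $\theta\big(q^m, aq^m, bq^m, \tfrac{a}{b}q^m; p\big)$ cancel between numerator and denominator, and what remains is exactly $\elliptic{mn}$. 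So \eqref{lem_ellidentities2} is essentially a telescoping cancellation.

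For the additive identity~\eqref{lem_ellidentities1}, I would first compute $\elliptic{m+n} - \elliptic{m}$. Putting both terms over the common denominator $\theta\big(q, aq, bq^{m+n}, \tfrac{a}{b}q^{m+n}, bq^m, \tfrac{a}{b}q^m; p\big)$ and pulling out the shared prefactor $\theta\big(bq, \tfrac{a}{b}q; p\big)/\theta(q,aq;p)$, the surviving numerator is
\[
\theta\big(q^{m+n}, aq^{m+n}, bq^m, \tfrac{a}{b}q^m; p\big) - \theta\big(q^m, aq^m, bq^{m+n}, \tfrac{a}{b}q^{m+n}; p\big).
\]
This is an instance of the four-term theta identity~\eqref{lem_thetaBasicProperties2}: the substitution $x=\sqrt{a}\,q^{m+n}$, $y=1/\sqrt{a}$, $u=\sqrt{a}\,q^m$, $z=b/\sqrt{a}$ reproduces the eight arguments on the left-hand side, and the right-hand side becomes $\tfrac{a}{b}q^{m+n}\,\theta\big(b/a, b, aq^{2m+n}, q^{-n}; p\big)$. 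Applying the reflection rule $\theta(1/w;p) = -\tfrac{1}{w}\theta(w;p)$ to both $\theta(q^{-n};p)$ and $\theta(b/a;p)$ converts this expression, after the two sign and prefactor corrections combine, into $q^m\,\theta\big(q^n, aq^{2m+n}, b, \tfrac{a}{b}; p\big)$. Dividing $\elliptic{m+n} - \elliptic{m}$ by $\elliptic{n}$ and reassembling the surviving $\theta$-factors, one recovers precisely the prescribed expression $v_{a,b;q,p}(m,n)$.

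The one creative step is guessing the substitution $(x,y,u,z)$ in the four-term theta identity; this is forced by matching the products $xy, x/y, uz, u/z$ against the arguments $q^{m+n}, aq^{m+n}, bq^m, \tfrac{a}{b}q^m$. Beyond that, the main obstacle is purely bookkeeping: correctly tracking the signs and powers of $q$ and $a/b$ that arise from the two applications of the reflection rule so that they assemble into the factor $q^m$ appearing in the definition of $v_{a,b;q,p}(m,n)$.
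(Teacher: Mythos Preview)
Your proposal is correct and follows essentially the same approach as the paper: the multiplicative identity~\eqref{lem_ellidentities2} is handled by direct cancellation, and the additive identity~\eqref{lem_ellidentities1} is reduced to a single application of Weber's four-term theta relation~\eqref{lem_thetaBasicProperties2}. The only cosmetic difference is the choice of substitution: the paper takes $x=a^{1/2}q^m$, $y=a^{1/2}b^{-1}$, $u=a^{1/2}q^{m+n}$, $z=a^{1/2}$, which lands directly on the factor $q^m\,\theta\big(\tfrac{a}{b},b,aq^{2m+n},q^n;p\big)$ without needing the two reflection corrections you apply afterward, but the underlying argument is the same.
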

\begin{proof}
	Rearranging the left hand side of equation~\eqref{lem_ellidentities1} and applying equation~\eqref{lem_thetaBasicProperties2} (by taking $x=a^{\frac{1}{2}}q^m$, $y=a^{\frac{1}{2}}b^{-1}$, $u=a^{\frac{1}{2}}q^{m+n}$ and $z=a^{\frac{1}{2}}$) yields
	\begin{gather*}
	[m+n]_{a,b;q,p} =\frac{
		\theta\big(bq,\frac{a}{b}q;p\big)
	}
	{
		\theta\big(q,aq,bq^{m+n},\frac{a}{b}q^{m+n};p\big)
	}
	\cdot	
	\frac{
		\theta\big(\frac{a}{b}q^{m}, bq^{m},aq^{m+n},q^{m+n};p\big)
	}
	{
		\theta\big(\frac{a}{b}q^{m}, bq^{m};p\big)
	}	\\
\hphantom{[m+n]_{a,b;q,p}}{} =		
	\frac{
		\theta\big(bq,\frac{a}{b}q;p\big)}
	{
		\theta\big(q,aq,bq^{m+n},\frac{a}{b}q^{m+n};p\big)
	}
	\\
\hphantom{[m+n]_{a,b;q,p}=}{} \times
	\frac{\theta\big(\frac{a}{b}q^{m+n},bq^{m+n},aq^m,q^m;p\big)+q^m\theta\big(\frac{a}{b},b, aq^{2m+n}, q^n;p\big)}
	{
		\theta(\frac{a}{b}q^{m}, bq^{m};p)
	} \\
\hphantom{[m+n]_{a,b;q,p}}{}=[m]_{a,b;q,p} + \frac{\theta\big(aq^{2m+n},b,bq^{n},\frac{a}{b}q^{n},\frac{a}{b};p\big)} {\theta\big(aq^{n},bq^{m},bq^{m+n},\frac{a}{b}q^{m},\frac{a}{b}q^{m+n};p\big)}q^m [n]_{a,b;q,p}.
	\end{gather*}
	Equation \eqref{lem_ellidentities2} follows from simple cancellations.
\end{proof}

Using the same arguments as in the $q$-case and replacing the weight of a domino whose top-right coordinate is $(i,1)$ by $\omega_1 (i,1)$,
we obtain the following lemmas.
\begin{Lemma}\label{lem_ellanalog_Fibonacci}
	For $n\in \N$, the elliptic analog of the Fibonacci numbers can be computed as
	\[
	[F_n]_{a,b;q,p} = \sum_T {\eweight(T)},
	\]
	where the sum ranges over all tilings of an $(n-1)$-strip using dominos and monominos.
\end{Lemma}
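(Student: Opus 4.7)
The plan is to mirror the proof of Lemma~\ref{lem_q_analog_Fibonacci}, replacing the use of equation~\eqref{lem_qidentities1} with its elliptic analog~\eqref{lem_ellidentities1} from Lemma~\ref{lem_ellidentities}. I would proceed by induction on $n$. The base cases $n=1,2$ are immediate: the $(n-1)$-strip admits a unique tiling (the empty tiling for $n=1$, and a single monomino for $n=2$), whose elliptic weight is $1$. On the other hand, $[F_1]_{a,b;q,p} = [F_2]_{a,b;q,p} = [1]_{a,b;q,p} = 1$ directly from the definition of the elliptic number.

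For $n>2$, I would apply equation~\eqref{lem_ellidentities1} with the substitutions $m \mapsto F_{n-1}$ and $n \mapsto F_{n-2}$, using $F_n = F_{n-1} + F_{n-2}$, to obtain
\[
[F_n]_{a,b;q,p} = [F_{n-1}]_{a,b;q,p} + v_{a,b;q,p}(F_{n-1}, F_{n-2}) \, [F_{n-2}]_{a,b;q,p}.
\]
By the induction hypothesis, $[F_{n-1}]_{a,b;q,p}$ is the sum of elliptic weights of tilings of the $(n-2)$-strip, which correspond bijectively to tilings of the $(n-1)$-strip ending with a monomino (contributing weight~$1$). Similarly, $[F_{n-2}]_{a,b;q,p}$ enumerates (with weight) the tilings of the $(n-3)$-strip, which correspond to tilings of the $(n-1)$-strip ending with a horizontal domino whose top-right corner is $(n-1,1)$.

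It then remains to identify $v_{a,b;q,p}(F_{n-1}, F_{n-2})$ as the elliptic weight of this terminating horizontal domino. By definition, $\omega_1(i,j) = v_{a,b;q^{F_j},p}(F_i, F_{i-1})$, so in the single-row setting $j=1$ and using $F_1=1$ gives $\omega_1(n-1,1) = v_{a,b;q,p}(F_{n-1}, F_{n-2})$, exactly matching the prefactor in the recurrence. I do not foresee a substantive obstacle: the argument is a direct transposition of the $q$-case, with $F_1 = 1$ ensuring that the substitution $q \mapsto q^{F_j}$ is trivial in the one-row setting. The only care needed is to verify that the indexing convention for $\omega_1$ in the definition of $\eweight$ aligns with the arguments of $v_{a,b;q,p}$ appearing in~\eqref{lem_ellidentities1}.
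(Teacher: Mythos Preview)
Your proposal is correct and follows essentially the same approach as the paper's own proof: induction on $n$ with base cases $n=1,2$, then applying equation~\eqref{lem_ellidentities1} to $F_n=F_{n-1}+F_{n-2}$ and identifying $v_{a,b;q,p}(F_{n-1},F_{n-2})=\omega_1(n-1,1)$ via $F_1=1$. Your write-up is in fact slightly more explicit than the paper's in checking the base cases and in verifying the identification of the domino weight.
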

\begin{proof}
	The result is clearly true for $n=1,2$. Let $n>2$, applying equation~\eqref{lem_ellidentities1} from Lemma~\ref{lem_ellidentities} we obtain
	\begin{align*}
	[F_n]_{a,b;q,p} &= [F_{n-1}+F_{n-2}]_{a,b;q,p} = [F_{n-1}]_{a,b;q,p} +
	v_{a,b;q,p}(F_{n-1},F_{n-2})
	[F_{n-2}]_{a,b;q,p}\\
	&= [F_{n-1}]_{a,b;q,p} +	\omega_1(n-1,1)	[F_{n-2}]_{a,b;q,p}.
	\end{align*}
	By induction, the first term of this sum corresponds to the tilings of an $(n-1)$-strip that finish with a monomino,
	while the second term to the tilings of an $(n-1)$-strip that finish with a~domino.
\end{proof}

\begin{Lemma}\label{lem_ellFibIdentities}
	For $m,n\in \N$, the following identity holds
	\begin{align}
	[F_{m+n}]_{a,b;q,p} & = [F_n]_{a,b;q,p} [F_{m+1}]_{a,b;q^{F_n},p} + \omega_2 (m,n) [F_m]_{a,b;q,p} [F_{n-1}]_{a,b;q^{F_m},p}. \label{lem_ellFibIdentities1}
	\end{align}
\end{Lemma}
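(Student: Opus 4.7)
The plan is to mimic the proof of Lemma~\ref{lem_qFibIdentities} but with the elliptic identities of Lemma~\ref{lem_ellidentities} playing the role of Lemma~\ref{lem_qidentities}. The starting point is the classical Fibonacci identity
\[
F_{m+n} = F_n F_{m+1} + F_m F_{n-1}
\]
from \eqref{lem_qFibIdentities1}, which is a statement about ordinary integers and therefore still available to us.

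First I would apply \eqref{lem_ellidentities1} to the sum $F_n F_{m+1} + F_m F_{n-1}$, substituting $m \mapsto F_n F_{m+1}$ and $n \mapsto F_m F_{n-1}$, to obtain
\[
[F_{m+n}]_{a,b;q,p} = [F_n F_{m+1}]_{a,b;q,p} + v_{a,b;q,p}(F_n F_{m+1},\, F_m F_{n-1})\,[F_m F_{n-1}]_{a,b;q,p}.
\]
Next I would apply \eqref{lem_ellidentities2} to each of the two products in the brackets, splitting
\[
[F_n F_{m+1}]_{a,b;q,p} = [F_n]_{a,b;q,p}\,[F_{m+1}]_{a,b;q^{F_n},p},
\]
and
\[
[F_m F_{n-1}]_{a,b;q,p} = [F_m]_{a,b;q,p}\,[F_{n-1}]_{a,b;q^{F_m},p}.
\]

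Finally, I would identify the elliptic weight factor with $\omega_2(m,n)$. By definition, $\omega_2(m,n) = v_{a,b;q,p}(F_{m+1} F_n,\, F_m F_{n-1})$, which agrees with $v_{a,b;q,p}(F_n F_{m+1},\, F_m F_{n-1})$ since $F_n F_{m+1} = F_{m+1} F_n$. Substituting all three expressions back yields precisely \eqref{lem_ellFibIdentities1}.

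There is essentially no obstacle here: the proof is a direct computation once Lemma~\ref{lem_ellidentities} is in hand, and the real work lies in that lemma (whose non-trivial step uses the Weierstrass-type theta identity \eqref{lem_thetaBasicProperties2}). The only thing to be careful about is bookkeeping of the base parameter $q$: after the first splitting via \eqref{lem_ellidentities2} the inner elliptic numbers live at base $q^{F_n}$ or $q^{F_m}$ rather than $q$, which is exactly the form needed on the right-hand side of \eqref{lem_ellFibIdentities1}.
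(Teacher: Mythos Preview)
Your proposal is correct and follows exactly the approach of the paper: the paper's proof is the single sentence ``Applying Lemma~\ref{lem_ellidentities} to equation~\eqref{lem_qFibIdentities1} leads to equation~\eqref{lem_ellFibIdentities1},'' and you have simply unpacked this into the explicit steps of first using~\eqref{lem_ellidentities1} on the sum and then~\eqref{lem_ellidentities2} on each product, followed by recognizing the prefactor as $\omega_2(m,n)$.
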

\begin{proof}Applying Lemma~\ref{lem_ellidentities} to equation~\eqref{lem_qFibIdentities1} leads to equation~\eqref{lem_ellFibIdentities1}.
\end{proof}

Now, we have all tools to prove the main theorem in this section.

\begin{proof}[Proof of Theorem~\ref{thm_ellipticFibonomial}]
	For $n=1$, we have $\ellipticFibonomial{m}{1}=[F_{m+1}]_{a,b;q,p}$. The collection $\tilings{m}{1}$ coincides with the tilings of an $m$-strip with dominos and monominos,
	since only the last step of the specified lattice path can be a north step because of the special vertical domino condition.
	Therefore, the result follows from Lemma~\ref{lem_ellanalog_Fibonacci}.
	
	For $m=1$, we have $\ellipticFibonomial{1}{n}=[F_{n+1}]_{a,b;q,p}$.
	The collection $\tilings{1}{n}$ can be identified with the collection of tilings of a vertical $n$-strip with dominos and monominos, where the topmost domino has a special weight.
	The weight of a usual vertical domino, whose top-right corner has coordinate $(1,j)$, is $\omega_1 (j,1)$ while
	the weight of a special vertical domino located at the same place is $\omega_2 (1,j)$. Since $\omega_2 (1,j)=\omega_1 (j,1)$, the result also follows from Lemma~\ref{lem_ellanalog_Fibonacci}.
	
	Now assume the result holds when $m=1$ or $n=1$.
	Letting $m,n>1$ and using equation~\eqref{lem_ellFibIdentities1} in the following equation we obtain
	\begin{align*}
	\BigellipticFibonomial{m}{n}
	&=
	\frac{[F_{m+n}]_{a,b;q,p}\ellipticFiboFactorial{m+n-1}}{\ellipticFiboFactorial{m}\cdot \ellipticFiboFactorial{n}}
=
	[F_{m+1}]_{a,b;q^{F_n},p}\BigellipticFibonomial{m}{n-1} \\
	&\quad+ \omega_2 (m,n) [F_{n-1}]_{a,b;q^{F_m},p} \BigellipticFibonomial{m-1}{n}.
	\end{align*}
	By induction (and using again Lemma~\ref{lem_ellanalog_Fibonacci}), the first term of the sum is
	the weighted counting of the path-domino tilings of the $m\times n$ rectangle whose specified path ends with a north step,
	while the second term is the weighted counting of those finishing with an east step.
	
	Indeed, the path-domino tilings whose path ends with a north step have an extra contribution $[F_{m+1}]_{a,b;q^{F_n},p}$.
	This corresponds to the weighted enumeration of the tilings of the last row with horizontal dominos and monominos.
	This follows from the fact that both quantities satisfy the same initial conditions and recurrence relation, which is obtained by applying equation~\eqref{lem_ellidentities1} to $F_m+F_{m-1}$
	\[
	[F_{m+1}]_{a,b;q^{F_n},p} =
	[F_{m}]_{a,b;q^{F_n},p} + \omega_1(m,n)
	[F_{m-1}]_{a,b;q^{F_n},p}.
	\]
	The path-domino tilings whose path
	ends with an east step have an extra contribution
	\[
	\omega_2 (m,n)[F_{n-1}]_{a,b;q^{F_m},p}.
	\]
This corresponds to the weight of the forced special vertical domino ($\omega_2 (m,n)$)
	and the tilings of the remaining $(n-2)$-strip in the last column ($[F_{n-1}]_{a,b;q^{F_m},p}$).
\end{proof}

\begin{Example}[$n=2$]\label{ex_ellFibSpiral}
	The identity \eqref{identity_qFibSpiral} in Example~\ref{ex_qFibSpiral} for $m\in \N$ and $n=2$ generalizes in the elliptic case to
\begin{gather*} %\label{identity_ellFibSpiral}
	\elliptic{F_{m+2}}\elliptic{F_{m+1}} = \sum_{k=1}^{m+1} \Omega_k^m \elliptic{F_k}^2,
\end{gather*}
where $\Omega_k^m=\prod\limits_{i=k}^m \omega_2 (i,2)$ is the product of the weights of the special vertical dominos, and $\elliptic{F_k}^2=\elliptic{F_k} \cdot [F_k]_{a,b;q^{F_2},p}$ is the weight of the two horizontal rows above the path.
\end{Example}
\begin{Example}[$a,b;p \to 0$]
	By computing the limits $p \to 0$, $a \to 0$ and $b \to 0$ (in this order) of~$\eweight(T)$ and $\ellipticFibonomial{m}{n}$ we obtain Theorem~\ref{thm_qFibonomial}.
\end{Example}

\begin{Remark}
	The elliptic analog of the Fibonomial number is a Fibonacci analog of the ``regular'' elliptic binomial coefficient
	\begin{gather*}
	\qBinomial{n}{k}{a,b;q,p} := \frac{[n]_{a,b;q,p}!}{[k]_{a,b;q,p}! \cdot [n-k]_{a,b;q,p}!},
	\end{gather*}
	where $[n]_{a,b;q,p}!=\prod\limits_{i=1}^{n} [i]_{a,b;q,p}$.
	This can be expressed as
	\begin{gather}\label{eq_ellipticbinomial}
	\qBinomial{n}{k}{a,b;q,p} = \frac{(q^{n-k+1},aq^{n-k+1},bq,\frac{a}{b}q;q,p)_k}
	{(q,aq,bq^{n-k+1},\frac{a}{b}q^{n-k+1};q,p)_k},
	\end{gather}
	where the theta shifted factorial is defined as $(a;q,p)_k = \prod\limits_{i=0}^{k-1} \theta(aq^i;p)$ for $k>0$, $(a;q,p)_0=1$ and $(a_1,a_2,\dots,a_l;q,p)_k=\prod\limits_{i=1}^{l} (a_i;q,p)_k$.
	
	Several elliptic versions of the binomial coefficient have been already considered in the literature, see for instance~\cite{symmetric_rains_2006} or \cite{elliptic_schlosser_2007}.
	By substituting $b \mapsto bq^k$ in equation~\eqref{eq_ellipticbinomial}, we obtain the elliptic binomial coefficient defined by Schlosser in \cite{elliptic_schlosser_2007}
	\[
	\qBinomial{n}{k}{a,b;q,p}' := \frac{\big(q^{n-k+1},aq^{n-k+1},bq^{1+k},\frac{a}{b}q^{1-k};q,p\big)_k}{\big(q,aq,bq^{n+1},\frac{a}{b}q^{n-2k+1};q,p\big)_k}.
	\]
	One can also check that our regular elliptic binomial coefficient is different to one defined by Rains in~\cite[Definition~11]{symmetric_rains_2006}, even in the simplest case where both partitions are single columns of respective lengths $n$ and~$k$, cf.\ \cite[equation~(3.12)]{elliptic_schlosser_2017}.	
\end{Remark}
An interesting task for future works on the elliptic Fibonomials could be to generalize Fibonomial identities to ellitpic identities, see, e.g., \cite{combinatorial_benjamin_2014, combinatorial_reiland} for potential candidates.
For instance, we generalize a proof by Reiland \cite{combinatorial_reiland} for the identity
\[
\Fibonomial{m}{n}=\sum_{j=0}^{n} F_{m+1}^j F_{n-j-1} \Fibonomial{m-1}{n-j}
\]
in the following corollary.
\begin{Corollary} \label{cor_identity}
	For $m,n \geq 1$, the elliptic Fibonomial $\ellipticFibonomial{m}{n}$ is equal to
	\[
	\sum_{j=0}^{n} \left( \prod_{i=0}^{j-1} [F_{m+1}]_{a,b;q^{F_{n-i}},p} \right) [F_{n-1-j}]_{a,b;q^{F_{m}},p} w_2(m,n-j) \BigellipticFibonomial{m-1}{n-j}.
	\]	
\end{Corollary}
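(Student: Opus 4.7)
The plan is to derive the identity from the two-term recurrence
\[
\BigellipticFibonomial{m}{n} = [F_{m+1}]_{a,b;q^{F_n},p}\BigellipticFibonomial{m}{n-1} + \omega_2(m,n) [F_{n-1}]_{a,b;q^{F_m},p} \BigellipticFibonomial{m-1}{n}
\]
established inside the proof of Theorem \ref{thm_ellipticFibonomial}. Combinatorially this recurrence sorts path-domino tilings by whether the final step of the specified path is a north step (first summand) or an east step (second summand); the claimed identity refines this further by grouping tilings according to the total number of trailing north steps of the path.

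I would iterate the recurrence by repeatedly substituting it into the $\BigellipticFibonomial{m}{n-k}$ factor on the right-hand side. A straightforward induction on $j$ shows that after $j$ unfoldings one has
\[
\BigellipticFibonomial{m}{n} = \sum_{k=0}^{j-1} \left(\prod_{i=0}^{k-1} [F_{m+1}]_{a,b;q^{F_{n-i}},p}\right) \omega_2(m,n-k) [F_{n-k-1}]_{a,b;q^{F_m},p} \BigellipticFibonomial{m-1}{n-k} + \left(\prod_{i=0}^{j-1} [F_{m+1}]_{a,b;q^{F_{n-i}},p}\right) \BigellipticFibonomial{m}{n-j}.
\]
Taking $j=n$ collapses the trailing factor to $\BigellipticFibonomial{m}{0}=1$, and the accumulated sum reproduces the terms indexed by $j=0,\ldots,n-1$ in the corollary.

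It remains to match the single leftover factor $\prod_{i=0}^{n-1}[F_{m+1}]_{a,b;q^{F_{n-i}},p}$ with the $j=n$ summand written in the statement; this is the only slightly subtle point. Using the standard extension $F_{-1}=1$, one has $[F_{-1}]_{a,b;q^{F_m},p}=[1]_{a,b;q^{F_m},p}=1$; the Fibonomial $\BigellipticFibonomial{m-1}{0}$ equals $1$ by definition; and $\omega_2(m,0)=v_{a,b;q,p}(F_{m+1}F_0,F_mF_{-1})=v_{a,b;q,p}(0,F_m)$ collapses to $1$ by direct inspection of the defining expression for $v$, since every theta factor in the numerator cancels with its counterpart in the denominator. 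With these three evaluations the leftover term coincides with the $j=n$ summand and the identity follows. The main obstacle is purely bookkeeping: one must track the $q$-shifted bases carefully through the iteration and be comfortable with the degenerate boundary at $j=n$, where Fibonacci indices dip to $-1$ and the associated elliptic weights degenerate to $1$.
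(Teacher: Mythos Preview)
Your argument is correct. The recurrence you quote follows from Lemma~\ref{lem_ellFibIdentities} for all $n\geq 1$ (not only for $m,n>1$, where the paper happens to invoke it), so the $n$-fold unfolding is legitimate; and your boundary checks at $j=n$ are accurate, since $v_{a,b;q,p}(0,F_m)=1$, $[1]_{a,b;q^{F_m},p}=1$, and $\ellipticFibonomial{m-1}{0}=1$.

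The route, however, differs from the paper's. The paper proves the corollary \emph{combinatorially} via Theorem~\ref{thm_ellipticFibonomial}: it partitions $\tilings{m}{n}$ according to the height $n-j$ at which the last east step of the path terminates, and then reads off the four factors in each summand directly from the tiling (the $j$ full rows on top, the forced special domino, the column beneath it, and the remaining $(m-1)\times(n-j)$ rectangle). Your proof is the purely algebraic counterpart: you iterate the two-term recurrence $n$ times without ever touching the tiling model. The two arguments are of course dual---one step of the recurrence is exactly the combinatorial splitting by the last step of the path---but your version has the advantage of being self-contained from Lemma~\ref{lem_ellFibIdentities} alone, while the paper's version makes the bijective content (and the meaning of each factor) transparent and handles the degenerate cases $j=n-1$ and $j=n$ by the same picture rather than by separate algebraic evaluations.
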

\begin{proof}For $0 \leq j \leq n$, consider all weighted path-domino tilings $\tilings{m}{n}^{j}$ of an $m \times n$ rectangle for $m,n \geq 1$ where the last horizontal step of the path ends at position $(m,n-j)$. To obtain the sum of the weights $\sum\limits_{T \in \tilings{m}{n}^{j}} \eweight(T)$, we subdivide the tilings (as illustrated in Fig.~\ref{fig_identitytiling}): The collection~$\tilings{m}{n}^{j}$ can be identified with the collection of
	\begin{itemize}\itemsep=0pt
		\item a path domino tiling of an $(m-1) \times (n-j)$ rectangle which adds weight $\ellipticFibonomial{m-1}{n-j}$,
		\item $j$ full rows on top which add weights $[F_{m+1}]_{a,b;q^{F_{n-i}},p}$ for $0 \leq i \leq j-1$ or $1$ if $j=0$,
		\item the tile below the last horizontal step, which is forced to be a special vertical domino with weight $\omega_2(m,n-j)$,
		\item and the column below the special vertical domino which adds weight $[F_{n-1-j}]_{a,b;q^{F_{m}},p}$
	\end{itemize}
to the sum. 	Summing over all $j$, we obtain the required convolution formula.
	Note that in the case $j=n-1$ the number of tilings is 0 since there has to be a vertical domino below the last horizontal step and indeed, there appears the factor $[F_0]_{a,b;q^{F_{m}},p}=0$ in the corresponding summand. For $j=n$ there is no special vertical domino and no column below the last east step of the path but $\omega_2(m,0)~=~1$ and $[F_{-1}]_{a,b;q^{F_{m}},p}=1$ (if we define $F_{-1}$ by $F_{-1}+F_0=F_1$ to be~$1$).
\end{proof}

\begin{figure}[t]\centering
\begin{tikzpicture}[scale=0.9]
%grid
\draw[draw=black] (0,0) rectangle ++(6,7);
\draw[dashed] (0,4) -- (6,4);
\draw[dashed] (0,5) -- (6,5);
\draw[dashed] (0,6) -- (6,6);
\draw[dashed] (5,0) -- (5,2);

%monominos

%horizontal dominos:

%special vertical dominos:
\ellipticLabeledSpecialDomino{6}{4}

%vertical dominos:
%lattice path:
\draw[line width=\pathwidth,color=blue]
(5,4)
-- ++(1,0)
-- ++(0,3);
%special vertical dominos labeling:
\node[right, inner sep=1.5pt] at (0.2,6.5) {$[F_{m+1}]_{a,b;q^{F_{n}},p}$};
\node[right, inner sep=1.5pt] at (0.2,5.5) {$[F_{m+1}]_{a,b;q^{F_{n-1}},p}$};
\node[right, inner sep=1.5pt] at (0.2,4.5) {$[F_{m+1}]_{a,b;q^{F_{n-(j-1)}},p}$};
\node[right, inner sep=1.5pt] at (6,3) {$\omega_2 (m,n-j)$};
\node[right, inner sep=1.5pt] at (6,1) {$[F_{n-1-j}]_{a,b;q^{F_m},p}$};

\node[inner sep=1.5pt] at (2.5,2) {$\BigellipticFibonomial{m-1}{n-j}$};
\end{tikzpicture}
	\caption{The parts of one summand of the right hand side of Corollary~\ref{cor_identity} for $m=6$, $n=7$ and $j=3$.}
	\label{fig_identitytiling}
\end{figure}
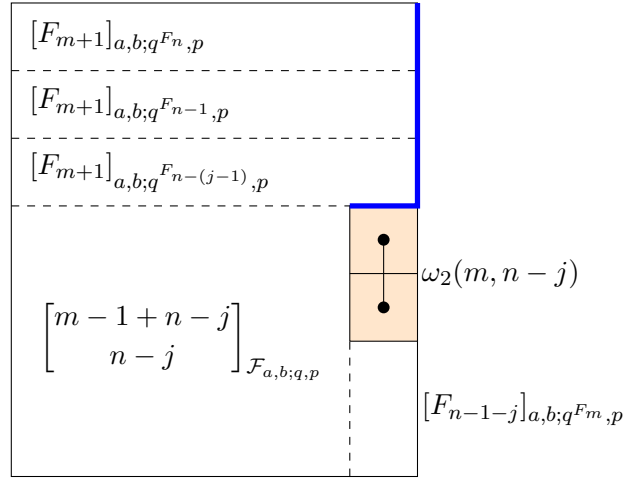

\section[The $q$-Fibonacci analog of the rational Catalan numbers]{The $\boldsymbol{q}$-Fibonacci analog of the rational Catalan numbers}
Given a pair of relatively prime numbers $m,n\in \N$ (that is, such that their greatest common divisor is~$(m,n):= \gcd\{m,n\}= 1$),
the \emph{$m,n$-Catalan number} is defined as
\[
\mnCat{m}{n} := \mnCatalan{m}{n}.
\]
This number is equal to the number of lattice paths from $(0,0)$ to $(m,n)$ that stay weakly above the main diagonal of the $m\times n$ rectangle.
The study of these numbers (also in the non-coprime case), and their $q$-analog and $q,t$-analog generalizations, is connected to numerous relevant topics including rectangular diagonal harmonics and Macdonald polynomials~\cite{aval_anote_2018,bergeron_open_2017,bergeron_someremarkable_2016,bergeron_compositional_2016,mellit_toric_2016}, Shi hyperplane arrangements and
affine Weyl groups~\cite{athanasiadis_refinement_2005,gorsky_affine_2016,thiel_anderson_2016}, affine Springer fibers~\cite{gorsky_affine_2016,hikita_affinespringer_2014}, affine Hecke algebras~\cite{cherednik_double_2005}, knot theory~\cite{gorsky_refinedknot_2015,gorsky_torus_2014}, and representation theory of Cherednik algebras~\cite{etingof_symplectic_2002,etingof_representations_2015,etingof_lecturenotes_2010}.

The \emph{$q$-Fibonacci analog of the $m,n$-Catalan number} is defined as
\[
[\mnFCat{m}{n}] :=
\BigmnqFiboCatalan{m}{n} = \frac{\qFibofactorial{m+n-1}}{\qFibofactorial{m}\qFibofactorial{n}}.
\]
Surprisingly, this rational expression also turns out to be a polynomial when the greatest common divisor $(m,n) \in \{1,2\}$.
Before proving this, we need the following lemma.
\begin{Lemma}[{\cite{divisibility_hoggatt_1974}}]
	For $m,n\in \N$, we have $(F_m,F_n)=F_{(m,n)}$.
\end{Lemma}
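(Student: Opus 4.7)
The plan is to mimic the Euclidean algorithm at the level of Fibonacci indices. First I would establish that consecutive Fibonacci numbers are coprime, i.e.\ $(F_k, F_{k+1}) = 1$ for every $k \geq 1$. This is an easy induction: the recursion $F_{k+1} = F_k + F_{k-1}$ shows that $(F_k, F_{k+1}) = (F_{k-1}, F_k)$, and the base case $(F_1, F_2) = 1$ is immediate.

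The key reduction step comes from the identity
\[
F_{m+n} = F_n F_{m+1} + F_m F_{n-1}
\]
from Lemma~\ref{lem_qFibIdentities} (equation~\eqref{lem_qFibIdentities1}). Reducing modulo $F_n$ yields $F_{m+n} \equiv F_m F_{n-1} \pmod{F_n}$, so
\[
(F_{m+n}, F_n) = (F_m F_{n-1}, F_n) = (F_m, F_n),
\]
where in the last step we used $(F_{n-1}, F_n) = 1$ from the previous paragraph.

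Iterating this reduction mirrors the Euclidean algorithm on the subscripts. Assuming without loss of generality that $m \geq n$ and writing $m = qn + r$ with $0 \leq r < n$, applying the reduction $q$ times gives $(F_m, F_n) = (F_r, F_n)$. Swapping the roles of the two indices and continuing, we proceed through the very same sequence of remainders that the Euclidean algorithm performs on the pair $(m, n)$, eventually reaching $(F_d, F_0)$ where $d = (m, n)$. Since $F_0 = 0$, this last gcd equals $F_d = F_{(m,n)}$, as required.

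The only nontrivial ingredient is the reduction identity $(F_{m+n}, F_n) = (F_m, F_n)$; the rest of the argument is bookkeeping to match the recursion on indices with the Euclidean algorithm. Minor care is needed at the boundary when an index drops to $0$ or $1$, but this is immediate from $F_0 = 0$ and $F_1 = 1$, and does not cause any issue.
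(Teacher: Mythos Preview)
Your argument is correct and is the standard proof of this classical identity. The paper itself does not supply a proof at all: the lemma is simply quoted with a citation to Hoggatt and Long, so there is no ``paper's own proof'' to compare against. Your approach --- coprimality of consecutive Fibonacci numbers, the addition formula $F_{m+n}=F_nF_{m+1}+F_mF_{n-1}$, and then tracking the Euclidean algorithm on the indices --- is exactly the usual route and is entirely sound.

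One small remark on presentation: when you say the reduction is applied ``$q$ times'' to reach $(F_r,F_n)$ and eventually $(F_d,F_0)$, note that the addition formula as stated in the paper assumes both indices lie in~$\N=\{1,2,\dots\}$. In practice the Euclidean algorithm terminates when one index divides the other, at which point repeated reduction brings you to $(F_d,F_d)=F_d$ without ever needing an index~$0$. You already flag this as a boundary issue handled by $F_0=0$, which is fine, but phrasing the termination as reaching $(F_d,F_d)$ rather than $(F_d,F_0)$ would make the argument airtight without needing to extend the identity to the case $m=0$.
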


\begin{Proposition}[S.X.~Li \cite{ShuXiao_2015}]\label{prop_mnCatPolynomiality}
	If the greatest common divisor $(m,n)\in \{1,2\}$, then $[\mnFCat{m}{n}]$ is a polynomial in $q$ with integer coefficients.
\end{Proposition}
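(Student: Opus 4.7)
The plan is to track the multiplicity of each cyclotomic polynomial in the factorization of $[\mnFCat{m}{n}]$ and show that this multiplicity is always non-negative when $\gcd(m,n)\in\{1,2\}$. Recall that $[k]_q=\prod_{d\mid k,\,d>1}\Phi_d(q)$, and for each integer $d>1$ introduce the \emph{rank of apparition} $z_d:=\min\{k\geq 1:d\mid F_k\}$, which is well-defined (by the periodicity of the Fibonacci sequence modulo $d$) and satisfies $d\mid F_k\iff z_d\mid k$ (a standard consequence of the strong divisibility property $\gcd(F_a,F_b)=F_{\gcd(a,b)}$ used in the previous lemma). Consequently the multiplicity of $\Phi_d$ in $\qFibofactorial{N}=\prod_{k=1}^N[F_k]_q$ is $\lfloor N/z_d\rfloor$, and the multiplicity of $\Phi_d$ in $[\mnFCat{m}{n}]=\qFibofactorial{m+n-1}/(\qFibofactorial{m}\qFibofactorial{n})$ equals
\[
\mu_d\;=\;\left\lfloor\frac{m+n-1}{z_d}\right\rfloor-\left\lfloor\frac{m}{z_d}\right\rfloor-\left\lfloor\frac{n}{z_d}\right\rfloor.
\]

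Next, I would carry out a short case analysis on the remainders $r:=m\bmod z_d$ and $s:=n\bmod z_d$. Writing $m=az_d+r$ and $n=bz_d+s$ with $0\le r,s<z_d$, direct inspection of the three ranges $r+s=0$, $1\le r+s\le z_d$, and $z_d+1\le r+s\le 2z_d-2$ yields $\mu_d\in\{-1,0,1\}$, with $\mu_d=-1$ occurring precisely when $r=s=0$. In other words, $\mu_d<0$ is possible only if $z_d\mid\gcd(m,n)$.

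To finish, I would observe that since $F_1=F_2=1$ no $d>1$ divides $F_1$ or $F_2$, so $z_d\geq 3$ for every $d>1$. Under the hypothesis $\gcd(m,n)\in\{1,2\}$, the divisibility $z_d\mid\gcd(m,n)$ is therefore impossible, every $\mu_d$ is non-negative, and $[\mnFCat{m}{n}]=\prod_{d\ge 2}\Phi_d(q)^{\mu_d}$ is a product of cyclotomic polynomials with non-negative exponents, hence a polynomial in $\Z[q]$. The only delicate step is the floor-function case analysis in the second paragraph, where the boundary cases $r+s=0$, $r+s=z_d$, and $r+s=2z_d-1$ must be distinguished carefully; once that is in place, the reduction to the trivial estimate $z_d\ge 3$ finishes the argument.
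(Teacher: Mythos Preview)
Your argument is correct and proceeds along a genuinely different line from the paper's. The paper first rewrites
\[
[F_n]_q\,\qFibonomial{m}{n}=[F_{m+n}]_q\,\qFibonomial{m}{n-1},
\]
invokes Theorem~\ref{thm_qFibonomial} to know that both $q$-Fibonomials are polynomials, and then observes via the preceding lemma that $\gcd(F_{m+n},F_n)=F_{(m,n)}=1$ when $(m,n)\in\{1,2\}$, whence $[F_{m+n}]_q$ and $[F_n]_q$ share no root and $[F_{m+n}]_q$ must divide $\qFibonomial{m}{n}$. Your route instead computes the multiplicity of each cyclotomic factor $\Phi_d$ directly from the rank of apparition $z_d$, reducing the question to a floor-function inequality that fails only when $z_d\mid\gcd(m,n)$, which is impossible since $z_d\ge 3$. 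The paper's argument is shorter and reuses the combinatorial polynomiality of the $q$-Fibonomial; yours is self-contained (it does not need Theorem~\ref{thm_qFibonomial}), yields the exact cyclotomic exponent, and adapts more readily to variants such as the $q$-Fibonacci $W$-Catalan numbers discussed in the subsequent remark. One cosmetic slip: in your final sentence the boundary value $r+s=2z_d-1$ cannot occur, since $r,s\le z_d-1$ forces $r+s\le 2z_d-2$; the case analysis you actually carry out is nonetheless exhaustive and correct.
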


\begin{proof}
	First note that
	\[
	[F_n] \BigqFibonomial{m}{n} = [F_{m+n}] \BigqFibonomial{m}{n-1}.
	\]
	Since all the terms involved in this identity are polynomials, we know $[F_{m+n}]$ divides $[F_n] \qFibonomial{m}{n}$.
	But
	\[
	(F_{m+n},F_n) = F_{(m+n,n)} = F_{(m,n)} =1
	\]
	whenever $(m,n)$ is equal to 1 or 2. Since the roots of $[n]$ are the $n$-th roots of unity that are different to $1$, the polynomials $[F_{m+n}]$ and $[F_n]$ have no roots in common.
	Thus, $[F_{m+n}]$ divides $\qFibonomial{m}{n}$.
\end{proof}

Computational experimentation for $m,n \leq 15$ suggests that the coefficients of these polynomials are non-negative integers.
However, we do not have a proof nor a combinatorial model to describe them.

\begin{Openproblem}	\label{prob_qFib_rational}
	Find a combinatorial interpretation of the $q$-Fibonacci analog of the rational Catalan numbers.
\end{Openproblem}

\begin{Remark}
The model of Sagan and Savage in~\cite{combinatorial_sagan_2010} gives a combinatorial interpretation of the Lucas analog of the binomial coefficients. The \emph{Lucas polynomials} $\{n\}$ generalize the Fibonacci numbers, and are defined by the initial conditions $\{0\}=0$, $\{1\}=1$ and the recurrence $\{n\}=s\{n-1\}+t\{n-2\}$ for some variables $s$,~$t$. It was proven in \cite[Section~6.2]{combinatorial_bennett_2018}, that the Lucas analog of the rational $m,n$-Catalan numbers is also polynomial in $s$, $t$ with integer coefficients.\footnote{Their proof is reproduced from the proof of Proposition~\ref{prop_mnCatPolynomiality} with our permission.} However, to the best of our knowledge, no $q$-analogs of the Lucas-binomial coefficients have been studied in the literature. The proof of Proposition~\ref{prop_mnCatPolynomiality} was originally found by our colleague S.X.~Li, during discussions about this topic in the Algebraic Combinatorics Seminar at the Fields Institute in 2015. At this Seminar, our colleague F.~Aliniaeifard showed that Conjecture~\ref{conj:unimodal} implies the positivity of the coefficients.
\end{Remark}

\begin{Remark}Given a crystallographic Coxeter group $W$ with Coxeter exponents $e_1<e_2<\dots<e_n$, the rational $W$-Catalan number is defined as $ C_W(a) = \prod\limits_{i=1}^n \frac{a+e_i}{e_i+1}$, and this is an integer when $a$ is relatively prime to $e_n +1$.
	The Coxeter exponents for the crystallographic Coxeter groups are
	$$\begin{array} {c|l}
	\text{type of $W$}& e_1,e_2,\ldots,e_n\\
	\hline
	A_n & 1,2,3,\ldots, n\\
	B_n & 1,3,5,\ldots, 2n-1\\
	D_n & n-1,1,3,5,\ldots, 2n-3\\
	E_6 & 1,4,5,7,8,11\\
	E_7 & 1,5,7,9,11,13,17\\
	E_8 & 1,7,11,13,17,19,23,29\\
	F_4 &1,5,7,11\\
	G_2 & 1,5\\
	\end{array}$$
	The classical Catalan number corresponds to type $A_n$.
	We can now define a $q$-Fibonacci analog as follows
\[ C_{W,{\mathcal F}}(a) = \prod_{i=1}^n \frac{[F_{a+e_i}]}{[F_{e_i+1}]}.\]
	We have computationally checked that this is a polynomial with positive integer coefficients when $a$ and $e_n +1$ are relatively prime, for each type and various values of $a$. It is interesting to note that although in type $A_n$
	we have shown that it is a polynomial as long as $(F_a,F_{e_n+1})=1$, for other types we must have the stronger condition $(a,e_n+1)=1$. For example $C_{F_4,{\mathcal F}}(2)$ is not a polynomial.
\end{Remark}

\section{The model of Bennett, Carrillo, Machacek, and Sagan}
Recently, Bennett, Carrillo, Machacek, and Sagan~\cite{combinatorial_bennett_2018} gave another combinatorial interpretation of the Lucas analog of the Binomial coefficient. A special case of their interpretation is that the Fibonomial number $\binom{n}{k}_\F=\frac{F_n^!}{F_k^! \cdot F_{n-k}^!}$ counts the number of certain partial tilings of a Young diagram of staircase shape of size $n$.
Similarly as above, we can also assign $q$-weights and elliptic weights to this model, giving rise to other combinatorial interpretations for the $q$-analog and elliptic analog of the Fibonomial number, respectively.

We start by briefly recalling the Bennett--Carrillo--Machacek--Sagan model. For $n \in \N$, consider the Young diagram of staircase shape of size $n$ (in French notation) associated to the partition $(n-1,n-2,\dots, 2,1)$: the position of the south-west corner is placed at the origin of the Cartesian coordinate system and the number of boxes in row $i$ (counted from bottom to top) is $n-i$. We also include the lines from $(0,n-1)$ to $(0,n)$ and from $(n-1,0)$ to $(n,0)$ for convenience. An example of a Young diagram of staircase shape for $n=9$ is illustrated on the left of Fig.~\ref{fig_bennetttiling}.

Given two natural numbers $n\geq k$, an \emph{$(n,k)$-tiling} is a partial tiling of a staircase shape of size~$n$ with monominos and dominos where one lattice path is specified such that:
\begin{itemize}\itemsep=0pt
	\item the lattice path goes from $(k,0)$ to $(0,n)$ using steps $(0,1)$ and $(-1,0)$. It stays inside the Young diagram and every west step $(-1,0)$ must be followed by a north step $(0,1)$.
	\item if a north step is not forced by a previous west step, the boxes that are on its left in the same row are tiled with monominos and horizontal dominos.
	\item if a north step is forced by a previous west step, the boxes that are on its right in the same row are tiled with monominos and horizontal dominos. The tile touching the north step is forced to be a domino (we call it a special domino).
\end{itemize}

We denote by $\staircaseTilings{n}{k}$ the collection of all $(n,k)$-tilings. An example for $n=9$ and $k=5$ is shown on the right of Fig.~\ref{fig_bennetttiling}.

\begin{figure}[h]	\centering
\begin{tikzpicture}[scale=0.7]
\grid{8}{1}
\grid{7}{2}
\grid{6}{3}
\grid{5}{4}
\grid{4}{5}
\grid{3}{6}
\grid{2}{7}
\grid{1}{8}
\draw (9,0) -- (0,0) -- (0,9);
\end{tikzpicture} \quad
\begin{tikzpicture}[scale=0.7]
%\node at (0.5,-0.5) {\color{white} a}; % this is just a trick to move the first figure so that both figures are at the same hegiht (I did not know how to do it differently)
%grid
\grid{8}{1}
\grid{7}{2}
\grid{6}{3}
\grid{5}{4}
\grid{4}{5}
\grid{3}{6}
\grid{2}{7}
\grid{1}{8}
\draw (9,0) -- (0,0) -- (0,9);
%monominos
\monomino{1}{2}
\monomino{4}{2}
\monomino{6}{3}
\monomino{3}{4}
\monomino{1}{7}
\monomino{2}{7}
%\monomino{1}{1}
%
%horizontal dominos:
\horizontalDomino{3}{2}
\horizontalDomino{2}{4}
\horizontalDomino{2}{6}
\horizontalDomino{8}{1}

%\verticalDomino{5}{2}

%special horizontal dominos:
\specialhorizontalDomino{6}{1}
\specialhorizontalDomino{5}{3}
\specialhorizontalDomino{4}{5}

%lattice path:
\draw[line width=\pathwidth,color=blue]
(5,0)
-- ++(-1,0)
-- ++(0,2)
-- ++(-1,0)
-- ++(0,2)
-- ++(-1,0)
-- ++(0,3)
-- ++(-1,0)
-- ++(0,1)
-- ++(-1,0)
-- ++(0,1);
\end{tikzpicture}

\caption{A Young diagram of staircase shape of size $9$ (left) and a $(5,9)$-tiling (right). }\label{fig_bennetttiling}
\end{figure}
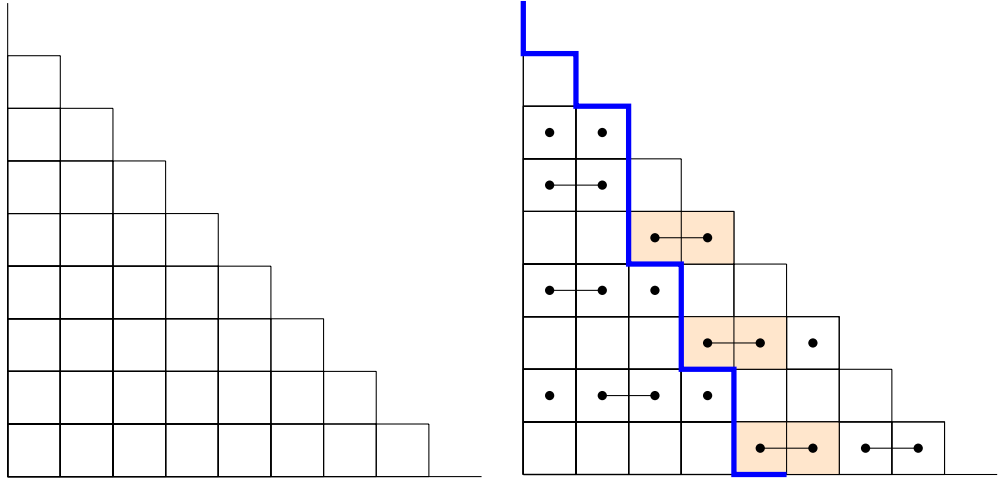

\begin{figure}[t]\centering
\tikzset{>=latex}
\begin{tikzpicture}[scale=1.1]
%\node at (0.5,-0.5) {\color{white} a}; % this is just a trick to move the first figure so that both figures are at the same hegiht (I did not know how to do it differently)
%grid
\grid{8}{1}
\grid{7}{2}

\draw (9,0) -- (0,0);
%monominos
%\monomino{1}{2}
%\monomino{4}{2}

%
%horizontal dominos:
\coloredhorizontalDomino{3}{2}{black}
\coloredhorizontalDomino{8}{1}{gray}

%special horizontal dominos:
%\specialhorizontalDomino{6}{1}

%lattice path:
\draw[line width=\pathwidth,color=blue]
(5,0)
-- ++(-1,0)
-- ++(0,2);

%weight distances
\draw[line width=1, color=gray, <->]
(8,0.65) -- node[above=-0.05cm] {\scriptsize floor} (6,0.65);
\draw[line width=1, color=gray, <->]
(0,0.65) -- node[above=-0.1cm] {\scriptsize height-1} (4,0.65);
\draw[line width=1, color=black, <->]
(7,1.65) -- node[above=-0.1cm] {\scriptsize height-1} (4,1.65);
\draw[line width=1, color=black, <->]
(0,1.65) -- node[above=-0.05cm] {\scriptsize floor} (3,1.65);

\end{tikzpicture}
\caption{An illustration of the height and floor statistics of one domino on the left and one domino on the right of the path. The domino on the top row has floor $3$ and height $4$, while the domino on the bottom row has floor $2$ and height~$5$.}	\label{fig_bennettweight}
\end{figure}
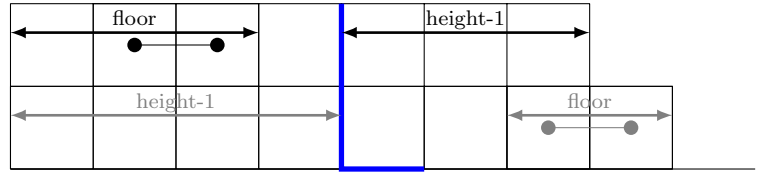

The tilings in the Bennett--Carrillo--Machacek--Sagan model are in natural bijection with the tilings of the Sagan--Savage model: the tilings on the right (resp. left) of the path in the staircase become the tilings of the columns below (resp. rows above) the path in the rectangle (see last paragraph before Section~3 in~\cite{combinatorial_bennett_2018}). We have chosen our example to match that of Fig.~\ref{fig_pathdominotiling}.

Using this bijection, one can easily reinterpret the $q$-weights and elliptic weights described in Sections~\ref{sec_q} and~\ref{sec_elliptic} for this new model.
Let $T$ be an $(n,k)$-tiling. For a tile $t$ (monomino or domino) on the left of the path, we define the \emph{floor} of $t$ as the number of boxes in the same row from the western border of the Young diagram to the eastern border of $t$, and the \emph{height} as 1 plus the number of boxes in the same row from the eastern border of the Young diagram to the north step to the east of~$t$.
For a tile $t$ on the right of the path the statistics are mirrored as illustrated in Fig.~\ref{fig_bennettweight}.

The $q$-weights of the tiles are defined as follows	
\begin{center}
\begin{tabular}{ccc}
 $\nu \left( \,
\begin{tikzpicture}[x=\boxsize,y=\boxsize,baseline=-4.5mm]
\monomino{0}{0}
\end{tikzpicture}
\, \right) = 1$,
& \
 $\nu \left( \,
\begin{tikzpicture}[x=\boxsize,y=\boxsize,baseline=-4.5mm]
\horizontalDomino{0}{0}
\end{tikzpicture}
\, \right) = q^{F_{\text{floor}}F_{\text{height}}}$,
& \
 $\nu \left( \,
\begin{tikzpicture}[x=\boxsize,y=\boxsize,baseline=-4.5mm]
\specialhorizontalDomino{0}{0}
\end{tikzpicture}
\, \right) = q^{F_{\text{floor}}F_{\text{height}+1}}$,
\end{tabular}
\end{center}
where the shaded domino represents a special domino forced by a west step of the path.
In the elliptic case the weights are defined as
\begin{center}
\begin{tabular}{ccc}
 $\widetilde\nu \left( \,
\begin{tikzpicture}[x=\boxsize,y=\boxsize,baseline=-4.5mm]
\monomino{0}{0}
\end{tikzpicture}
\, \right) = 1$,
& \
 $\widetilde\nu \left( \,
\begin{tikzpicture}[x=\boxsize,y=\boxsize,baseline=-4.5mm]
\horizontalDomino{0}{0}
\end{tikzpicture}
\, \right) = \omega_1(\text{floor},\text{height})$,
& \
 $\widetilde\nu \left( \,
\begin{tikzpicture}[x=\boxsize,y=\boxsize,baseline=-4.5mm]
\specialhorizontalDomino{0}{0}
\end{tikzpicture}
\, \right) = \omega_2(\text{floor},\text{height})$.
\end{tabular}
\end{center}
The $q$- and elliptic weights of a tiling~$T$ are defined as the product of the weights of its tiles.

As consequences of Theorem~\ref{thm_qFibonomial} and Theorem~\ref{thm_ellipticFibonomial} we get the following two results.

\begin{Corollary}\label{cor_q_Bennet}
	For $n\geq k$, the $q$-analog of the Fibonomial number can be computed as
	\[
	\BigqFibonomialBennet{n}{k} = \sum_{T\in \staircaseTilings{n}{k}} \nu(T).
	\]
\end{Corollary}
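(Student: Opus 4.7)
The plan is to deduce Corollary~\ref{cor_q_Bennet} from Theorem~\ref{thm_qFibonomial} via the natural bijection between the Bennett--Carrillo--Machacek--Sagan staircase model and the Sagan--Savage rectangle model recalled in the paragraph just before the corollary statement (cf.\ the last paragraph before Section~3 in \cite{combinatorial_bennett_2018}). Explicitly, this bijection sends an $(n,k)$-tiling $T \in \staircaseTilings{n}{k}$ to a path-domino tiling $T' \in \tilings{n-k}{k}$, mapping the tiles on the left of the staircase path onto the rows above the rectangle path (with horizontal dominos and monominos preserved), and the tiles on the right of the staircase path onto the columns below the rectangle path (with horizontal dominos rotated by ninety degrees into vertical dominos and special horizontal dominos rotated into special vertical dominos).

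The key step is to check that this bijection is weight-preserving, i.e.\ $\nu(T) = \omega(T')$. Since both weights are multiplicative over tiles and monominos contribute $1$ on either side, it suffices to match the domino weights. For a horizontal domino on the left of the staircase path, its floor and height statistics (as defined via Fig.~\ref{fig_bennettweight}) coincide precisely with the coordinates $i$ and $j$ of the top-right corner of the image horizontal domino above the rectangle path; hence its staircase weight $q^{F_{\mathrm{floor}}F_{\mathrm{height}}}$ equals the rectangle weight $q^{F_iF_j}$. For a horizontal domino on the right of the staircase path, the ninety-degree rotation turning it into a vertical domino in the rectangle interchanges the roles of rows and columns, so the mirrored floor becomes the row index $j$ and the mirrored height becomes the column index $i$ of the top-right corner $(i,j)$ of the image vertical domino; one then reads $q^{F_{\mathrm{floor}}F_{\mathrm{height}}}=q^{F_jF_i}=q^{F_iF_j}$, matching the rectangle side. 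Finally, for a special horizontal domino on the right the same identification yields $q^{F_{\mathrm{floor}}F_{\mathrm{height}+1}}=q^{F_jF_{i+1}}$, which is exactly the special vertical domino weight $q^{F_{i+1}F_j}$; the $+1$ shift on the height coordinate in the staircase model lands precisely on the column index $i$ on the rectangle side.

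Once this weight-preserving bijection has been established, Corollary~\ref{cor_q_Bennet} follows immediately:
\[
\sum_{T \in \staircaseTilings{n}{k}} \nu(T) \;=\; \sum_{T' \in \tilings{n-k}{k}} \omega(T') \;=\; \BigqFibonomial{n-k}{k} \;=\; \BigqFibonomialBennet{n}{k},
\]
where the middle equality is Theorem~\ref{thm_qFibonomial} and the last equality is definitional.

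The main obstacle is the careful bookkeeping required to verify that the mirrored floor and height statistics of a tile on the \emph{right} of the staircase path transform into the row and column indices of the corresponding vertical domino in the rectangle in exactly this order (and not the reverse), so that the $+1$ Fibonacci shift appearing in the special-domino weight lands on the column coordinate; everything else amounts to a straightforward unpacking of the bijection row by row.
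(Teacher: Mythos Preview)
Your approach matches the paper's exactly: Corollary~\ref{cor_q_Bennet} is presented there as an immediate consequence of Theorem~\ref{thm_qFibonomial}, the staircase weights having been \emph{defined} precisely so that the bijection to the rectangle model is weight-preserving, and your proof simply spells out this verification tile by tile. One small slip worth fixing: the target of the bijection is $\tilings{k}{n-k}$, not $\tilings{n-k}{k}$ (your own description --- $k$ forced staircase rows becoming the columns below and $n-k$ free rows becoming the rows above --- forces this, as does the paper's running example $\staircaseTilings{9}{5}\leftrightarrow\tilings{5}{4}$), though by the symmetry of the Fibonomial your final chain of equalities is unaffected.
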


\begin{Corollary}\label{cor_elliptic_Bennet}
	For $n\geq k$, the elliptic analog of the Fibonomial number can be computed as
	\[
	\BigellipticFibonomialBennet{n}{k} = \sum_{T\in \staircaseTilings{n}{k}} \widetilde\nu(T).
	\]
\end{Corollary}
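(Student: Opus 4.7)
The plan is to derive Corollary~\ref{cor_elliptic_Bennet} from Theorem~\ref{thm_ellipticFibonomial} via the bijection between the Bennett--Carrillo--Machacek--Sagan staircase model and the Sagan--Savage rectangle model. Since $\ellipticFibonomialNK{n}{k}=\ellipticFibonomial{k}{n-k}$, it suffices to exhibit a weight-preserving bijection $\Phi\colon\staircaseTilings{n}{k}\to\tilings{k}{n-k}$ with $\widetilde\nu(T)=\eweight(\Phi(T))$ for every $T$, and then invoke Theorem~\ref{thm_ellipticFibonomial}.

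For $\Phi$ itself I would use the correspondence recalled in the paragraph before Fig.~\ref{fig_bennettweight}: tiles to the right of the staircase path are sent to tiles in the columns below the rectangle path, tiles to the left of the staircase path are sent to tiles in the rows above the rectangle path, special horizontal dominos of the staircase (those forced by a west step) correspond to the special vertical dominos of the rectangle (those touching the path from below), and monominos are sent to monominos. This is a straightforward reindexing of the configurations.

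The heart of the proof is the tile-by-tile weight check. A horizontal domino on the left of the staircase path with floor $f$ and height $h$ is sent to a horizontal domino above the rectangle path whose top-right corner is $(f,h)$, so both carry weight $\omega_1(f,h)$. A horizontal domino on the right of the staircase path with floor $f$ and height $h$ is sent to a vertical domino below the rectangle path whose top-right corner is $(h,f)$; by the transposition built into the definition of $\eweight$ for vertical dominos, its rectangle-weight equals $\omega_1(f,h)$, matching the staircase-weight. A special horizontal domino with floor $f$ and height $h$ is sent to a special vertical domino with top-right corner $(f,h)$, both carrying weight $\omega_2(f,h)$. Multiplying these equalities over all tiles of $T$ gives $\widetilde\nu(T)=\eweight(\Phi(T))$, and summing over $T\in\staircaseTilings{n}{k}$ together with Theorem~\ref{thm_ellipticFibonomial} yields the desired identity.

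The main thing requiring care is the bookkeeping that matches the floor/height statistics of the staircase to the top-right corner coordinates $(i,j)$ of the image tiles in the rectangle (with the appropriate transposition for tiles on the right of the staircase path). This transposition is invisible in the $q$-case, where $\omega_1(i,j)$ is symmetric under $(i,j)\leftrightarrow(j,i)$, but is essential in the elliptic case since $\omega_1(i,j)\neq\omega_1(j,i)$ in general; hence it is exactly here where the right-vs-left split in the staircase must be perfectly aligned with the above-vs-below split in the rectangle.
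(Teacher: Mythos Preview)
Your approach is precisely the one the paper intends: it states both corollaries as direct consequences of Theorems~\ref{thm_qFibonomial} and~\ref{thm_ellipticFibonomial} via the bijection, and a tile-by-tile weight check is the right mechanism. However, there is a bookkeeping slip in your treatment of the special domino. A special horizontal domino lies on the \emph{right} of the staircase path (it is forced by a west step), so its image should follow the same coordinate rule you correctly give for ordinary right-side tiles: top-right corner $(h,f)$, not $(f,h)$. In the paper's running example, the special domino in row~$1$ of Fig.~\ref{fig_bennetttiling} has floor~$4$ and height~$5$, and its image in Fig.~\ref{fig_pathdominotiling} is the special vertical domino at $(5,4)$; note that $(4,5)$ is not even inside the $5\times4$ rectangle.

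With the corrected landing point $(h,f)$, the rectangle weight is $\omega_2(h,f)$, which does \emph{not} equal the staircase weight $\omega_2(f,h)$ as written in the paper, since $\omega_2$ is not symmetric. Indeed the $q$-limit of $\omega_2(\text{floor},\text{height})$ is $q^{F_{\text{floor}+1}F_{\text{height}}}$, whereas the paper's own $q$-weight for the special staircase domino is $q^{F_{\text{floor}}F_{\text{height}+1}}$; these already disagree, which strongly suggests the intended elliptic weight is $\omega_2(\text{height},\text{floor})$. With that correction the bijection is genuinely weight-preserving and your argument goes through; as written, your coordinate slip for the special domino happens to cancel against what appears to be a typo in the paper's weight table, so the match you assert has not actually been verified.
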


An example of Corollary~\ref{cor_q_Bennet}, for $n=4$ and $k=2$, is given in Fig.~\ref{fig_42tilings_Bennet}. We have chosen the order of the tilings to match the order of the tilings for the corresponding example in Fig.~\ref{fig_pathdominotilings22}.
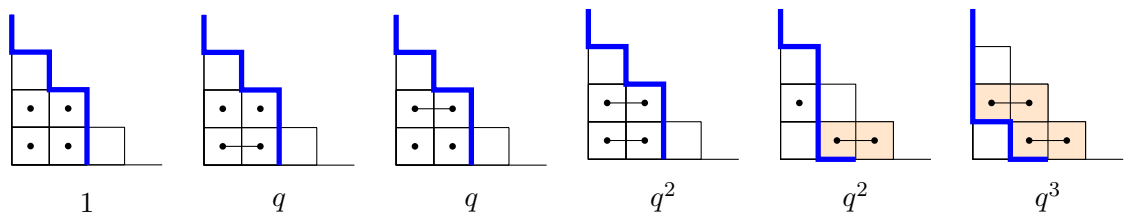
\begin{figure}[h]	\centering
%\begin{subfigure}[b]{.15\linewidth}
	\begin{tikzpicture}[scale=0.5]
	\grid{3}{1}
	\grid{2}{2}
	\grid{1}{3}
	\draw (4,0) -- (0,0) -- (0,4);
	\monomino{1}{1}
	\monomino{1}{2}
	\monomino{2}{1}
	\monomino{2}{2}
	\draw[line width=\pathwidth,color=blue] (2,0) -- ++(0,2) -- ++(-1,0) -- ++(0,1) -- ++ (-1,0) -- ++(0,1);
	\node at (2,-1) {$1$};
	\end{tikzpicture}
%\end{subfigure}
\quad
%\begin{subfigure}[b]{.15\linewidth}
	\begin{tikzpicture}[scale=0.5]
	\grid{3}{1}
	\grid{2}{2}
	\grid{1}{3}
	\draw (4,0) -- (0,0) -- (0,4);
	\horizontalDomino{2}{1}
	\monomino{1}{2}
	\monomino{2}{2}
	\draw[line width=\pathwidth,color=blue] (2,0) -- ++(0,2) -- ++(-1,0) -- ++(0,1) -- ++ (-1,0) -- ++(0,1);
	\node at (2,-1) {$q$};
	\end{tikzpicture}
%\end{subfigure}
\quad
%\begin{subfigure}[b]{.15\linewidth}
	\begin{tikzpicture}[scale=0.5]
	\grid{3}{1}
	\grid{2}{2}
	\grid{1}{3}
	\draw (4,0) -- (0,0) -- (0,4);
	\horizontalDomino{2}{2}
	\monomino{1}{1}
	\monomino{2}{1}
	\draw[line width=\pathwidth,color=blue] (2,0) -- ++(0,2) -- ++(-1,0) -- ++(0,1) -- ++ (-1,0) -- ++(0,1);
	\node at (2,-1) {$q$};
	\end{tikzpicture}
%\end{subfigure}
\quad
%\begin{subfigure}[b]{.15\linewidth}
	\begin{tikzpicture}[scale=0.5]
	\grid{3}{1}
	\grid{2}{2}
	\grid{1}{3}
	\draw (4,0) -- (0,0) -- (0,4);
	\horizontalDomino{2}{1}
	\horizontalDomino{2}{2}
	\draw[line width=\pathwidth,color=blue] (2,0) -- ++(0,2) -- ++(-1,0) -- ++(0,1) -- ++ (-1,0) -- ++(0,1);
	\node at (2,-1) {$q^2$};
	\end{tikzpicture}
%\end{subfigure}
\quad
%\begin{subfigure}[b]{.15\linewidth}
	\begin{tikzpicture}[scale=0.5]
	\grid{3}{1}
	\grid{2}{2}
	\grid{1}{3}
	\draw (4,0) -- (0,0) -- (0,4);
	\specialhorizontalDomino{3}{1}
	\monomino{1}{2}
	\draw[line width=\pathwidth,color=blue] (2,0) -- ++(-1,0) -- ++(0,2) -- ++(0,1) -- ++ (-1,0) -- ++(0,1);
	\node at (2,-1) {$q^2$};
	\end{tikzpicture}
%\end{subfigure}
\quad
%\begin{subfigure}[b]{.15\linewidth}
	\begin{tikzpicture}[scale=0.5]
	\grid{3}{1}
	\grid{2}{2}
	\grid{1}{3}
	\draw (4,0) -- (0,0) -- (0,4);
	\specialhorizontalDomino{3}{1}
	\specialhorizontalDomino{2}{2}
	\draw[line width=\pathwidth,color=blue] (2,0) -- ++(-1,0) -- ++(0,1) -- ++ (-1,0) -- ++(0,3);
	\node at (2,-1) {$q^3$};
	\end{tikzpicture}
%\end{subfigure}

	\caption{The six $(4,2)$-tilings of a staircase shape of size 4, and their contribution to the $q$-Fibonomial
		$\protect\qFibonomialBennet{4}{2}=\frac{[F_4][F_3]}{[F_2][F_1]}=\frac{[3][2]}{[1][1]}=1+2q+2q^2+q^3$
		when computed as a generating function $\sum\limits_{T\in \staircaseTilings{4}{2}} \nu(T)$.}
	\label{fig_42tilings_Bennet}
\end{figure}

\begin{Remark}
	The Bennett--Carrillo--Machacek--Sagan model also gives rise to combinatorial interpretations of the Lucas analog of the (ordinary) Catalan numbers and Fuss--Catalan numbers described in \cite{combinatorial_bennett_2018}.
	They define a Catalan partial tiling $C$ of a Young diagram of staircase shape of size $2n$ to be a $(2n,n-1)$-tiling where the first row (at the bottom) stays blank except for the forced special domino if the first step is a west step.
	
The Lucas analog of the Catalan numbers
	\[ C_{\{n\}} := \frac{1}{\{n+1\}} \frac{\{2n\}!}{\{n\}! \{n\}!}, \] where $\{n\}!=\prod\limits_{k=1}^{n} \{k\}$,
	is the sum over the weights of all Catalan partial tilings $C$ of size $2n$ if all dominos have weight $t$, all monominos have weight $s$, and the weight of the tiling $C$ is defined as the product of the weights of its tiles \cite[Corollary~4.2.]{combinatorial_bennett_2018}.
	
	This model does not translate to the $q$-Fibonacci analog of the Catalan numbers \[ [\nFCat{n}] := \frac{1}{[F_{n+1}]_q} \BigqFibonomial{2n}{n} = \frac{\qFibofactorial{2n}}{\qFibofactorial{n+1}\qFibofactorial{n}}. \] For example, $[\nFCat{3}]$ is not equal to the sum of the weights of all Catalan partial tilings $C$ of size~$6$ with the corresponding $q$-weights defined above.
\end{Remark}

Although the Bennett--Carrillo--Machacek--Sagan model does not seem to trivially solve the following problem, it still might be easier than Open Problem~\ref{prob_qFib_rational}.
\begin{Openproblem}	
	Find a combinatorial interpretation of the $q$-Fibonacci analog of the Catalan numbers (and the Fuss--Catalan numbers).
\end{Openproblem}

\subsection*{Acknowledgements}
The authors are grateful to Farid Aliniaeifard, Tom Denton, Shu Xiao Li, Drew Armstrong, Bruce Sagan, Michael Schlosser, and Mike Zabrocki for helpful and inspiring discussions. We also thank anonymous referees for their useful comments and suggestions.
NB was supported by NSERC and a York Research Chair. CC was supported by the Austrian Science Foundation FWF, grant F 5008-N15, in the framework of the Special Research Program Algorithmic and Enumerative Combinatorics. JK was supported by the Austrian Science Foundation FWF, grant P 32305.

\pdfbookmark[1]{References}{ref}
\LastPageEnding


\begin{thebibliography}{99}
\footnotesize\itemsep=0pt

\bibitem{generalized_amdeberhan_2014}
Amdeberhan T., Chen X., Moll V.H., Sagan B.E., Generalized {F}ibonacci
 polynomials and {F}ibonomial coefficients, \href{https://doi.org/10.1007/s00026-014-0242-9}{\textit{Ann. Comb.}} \textbf{18}
 (2014), 541--562, \href{https://arxiv.org/abs/1306.6511}{arXiv:1306.6511}.

\bibitem{athanasiadis_refinement_2005}
Athanasiadis C.A., On a refinement of the generalized {C}atalan numbers for
 {W}eyl groups, \href{https://doi.org/10.1090/S0002-9947-04-03548-2}{\textit{Trans. Amer. Math. Soc.}} \textbf{357} (2005),
 179--196.

\bibitem{aval_anote_2018}
Aval J.C., Bergeron F., A note on: rectangular {S}chr\"oder parking functions
 combinatorics, \textit{S\'em. Lothar. Combin.} \textbf{79} (2018), Art.~B79a,
 13~pages, \href{https://arxiv.org/abs/1603.09487}{arXiv:1603.09487}.

\bibitem{combinatorial_benjamin_2014}
Benjamin A.T., Reiland E., Combinatorial proofs of {F}ibonomial identities,
 \textit{Fibonacci Quart.} \textbf{52} (2014), 28--34.

\bibitem{combinatorial_bennett_2018}
Bennett C., Carrillo J., Machacek J., Sagan B.E., Combinatorial interpretations
 of {L}ucas analogues of binomial coefficients and {C}atalan numbers,
 \href{https://arxiv.org/abs/1809.09036}{arXiv:1809.09036}.

\bibitem{bergeron_open_2017}
Bergeron F., Open questions for operators related to rectangular {C}atalan
 combinatorics, \href{https://doi.org/10.4310/JOC.2017.v8.n4.a6}{\textit{J.~Comb.}} \textbf{8} (2017), 673--703,
 \href{https://arxiv.org/abs/1603.04476}{arXiv:1603.04476}.

\bibitem{bergeron_someremarkable_2016}
Bergeron F., Garsia A., Leven E.S., Xin G., Some remarkable new plethystic
 operators in the theory of {M}acdonald polynomials, \href{https://doi.org/10.4310/JOC.2016.v7.n4.a6}{\textit{J.~Comb.}}
 \textbf{7} (2016), 671--714, \href{https://arxiv.org/abs/1405.0316}{arXiv:1405.0316}.

\bibitem{bergeron_compositional_2016}
Bergeron F., Garsia A., Sergel~Leven E., Xin G., Compositional
 {$(km,kn)$}-shuffle conjectures, \href{https://doi.org/10.1093/imrn/rnv272}{\textit{Int. Math. Res. Not.}} \textbf{2016}
 (2016), 4229--4270, \href{https://arxiv.org/abs/1404.4616}{arXiv:1404.4616}.

\bibitem{q_borodin_2010}
Borodin A., Gorin V., Rains E.M., {$q$}-distributions on boxed plane
 partitions, \href{https://doi.org/10.1007/s00029-010-0034-y}{\textit{Selecta Math. (N.S.)}} \textbf{16} (2010), 731--789,
 \href{https://arxiv.org/abs/0905.0679}{arXiv:0905.0679}.

\bibitem{thefractal_chen_2014}
Chen X., Sagan B.E., The fractal nature of the {F}ibonomial triangle,
 \textit{Integers} \textbf{14} (2014), A3, 12~pages, \href{https://arxiv.org/abs/1306.2377}{arXiv:1306.2377}.

\bibitem{cherednik_double_2005}
Cherednik I., Double affine {H}ecke algebras, \textit{London Mathematical
 Society Lecture Note Series}, Vol.~319, \href{https://doi.org/10.1017/CBO9780511546501}{Cambridge University Press},
 Cambridge, 2005.

\bibitem{goldenRationFibonacciBook}
Dunlap R.A., The golden ratio and {F}ibonacci numbers, \href{https://doi.org/10.1142/9789812386304}{World Sci. Publ. Co.,
 Inc.}, River Edge, NJ, 1997.

\bibitem{etingof_symplectic_2002}
Etingof P., Ginzburg V., Symplectic reflection algebras, {C}alogero--{M}oser
 space, and deformed {H}arish-{C}handra homomorphism, \href{https://doi.org/10.1007/s002220100171}{\textit{Invent. Math.}}
 \textbf{147} (2002), 243--348, \href{https://arxiv.org/abs/math.AG/0011114}{arXiv:math.AG/0011114}.

\bibitem{etingof_representations_2015}
Etingof P., Gorsky E., Losev I., Representations of rational {C}herednik
 algebras with minimal support and torus knots, \href{https://doi.org/10.1016/j.aim.2015.03.003}{\textit{Adv. Math.}}
 \textbf{277} (2015), 124--180, \href{https://arxiv.org/abs/1304.3412}{arXiv:1304.3412}.

\bibitem{etingof_lecturenotes_2010}
Etingof P., Ma X., Lecture notes on {C}herednik algebras, \href{https://arxiv.org/abs/1001.0432}{arXiv:1001.0432}.

\bibitem{gorsky_affine_2016}
Gorsky E., Mazin M., Vazirani M., Affine permutations and rational slope
 parking functions, \href{https://doi.org/10.1090/tran/6584}{\textit{Trans. Amer. Math. Soc.}} \textbf{368} (2016),
 8403--8445, \href{https://arxiv.org/abs/1403.0303}{arXiv:1403.0303}.

\bibitem{gorsky_refinedknot_2015}
Gorsky E., Negu\c{t} A., Refined knot invariants and {H}ilbert schemes,
 \href{https://doi.org/10.1016/j.matpur.2015.03.003}{\textit{J.~Math. Pures Appl.}} \textbf{104} (2015), 403--435,
 \href{https://arxiv.org/abs/1304.3328}{arXiv:1304.3328}.

\bibitem{gorsky_torus_2014}
Gorsky E., Oblomkov A., Rasmussen J., Shende V., Torus knots and the rational
 {DAHA}, \href{https://doi.org/10.1215/00127094-2827126}{\textit{Duke Math.~J.}} \textbf{163} (2014), 2709--2794,
 \href{https://arxiv.org/abs/1207.4523}{arXiv:1207.4523}.

\bibitem{hikita_affinespringer_2014}
Hikita T., Affine {S}pringer fibers of type {$A$} and combinatorics of diagonal
 coinvariants, \href{https://doi.org/10.1016/j.aim.2014.06.011}{\textit{Adv. Math.}} \textbf{263} (2014), 88--122,
 \href{https://arxiv.org/abs/1203.5878}{arXiv:1203.5878}.

\bibitem{divisibility_hoggatt_1974}
Hoggatt Jr. V.E., Long C.T., Divisibility properties of generalized {F}ibonacci
 polynomials, \textit{Fibonacci Quart.} \textbf{12} (1974), 113--120.

\bibitem{ShuXiao_2015}
Li S.X., {P}ersonal communication, 2015, {A}lgebraic Combinatorics Seminar at
 the Fields Institute.

\bibitem{mellit_toric_2016}
Mellit A., Toric braids and $(m,n)$-parking functions, \href{https://arxiv.org/abs/1604.07456}{arXiv:1604.07456}.

\bibitem{fabulousFibonacciBook}
Posamentier A.S., Lehmann I., The (fabulous) {F}ibonacci numbers, Prometheus
 Books, Amherst, NY, 2007.

\bibitem{symmetric_rains_2006}
Rains E.M., {$BC_n$}-symmetric {A}belian functions, \textit{Duke Math.~J.}
 \textbf{135} (2006), 99--180, \href{https://arxiv.org/abs/math.CO/0402113}{arXiv:math.CO/0402113}.

\bibitem{combinatorial_reiland}
Reiland E., Combinatorial Interpretations of {F}ibonomial identities,
 {M}aster's Thesis, Harvey Mudd College, Claremont, CA, 2011.

\bibitem{elliptic_rosengren_2016}
Rosengren H., Elliptic hypergeometric functions, {L}ecture notes at OPSF-S6,
 College Park, Maryland, August 2016, \href{https://arxiv.org/abs/1608.06161}{arXiv:1608.06161}.

\bibitem{combinatorial_sagan_2010}
Sagan B.E., Savage C.D., Combinatorial interpretations of binomial coefficient
 analogues related to {L}ucas sequences, \textit{Integers} \textbf{10} (2010),
 A52, 697--703, \href{https://arxiv.org/abs/0911.3159}{arXiv:0911.3159}.

\bibitem{elliptic_schlosser_2007}
Schlosser M.J., Elliptic enumeration of nonintersecting lattice paths,
 \href{https://doi.org/10.1016/j.jcta.2006.07.002}{\textit{J.~Combin. Theory Ser.~A}} \textbf{114} (2007), 505--521,
 \href{https://arxiv.org/abs/math.CO/0602260}{arXiv:math.CO/0602260}.

\bibitem{noncommutative_schlosser_2011}
Schlosser M.J., A noncommutative weight-dependent generalization of the
 binomial theorem, \textit{S\'em. Lothar. Combin.} \textbf{81} (2020), Art.~B81j, 20~pages, \href{https://arxiv.org/abs/1106.2112}{arXiv:1106.2112}.

\bibitem{logconcav_schlosser_2020}
Schlosser M.J., Senapati K., Uncu A.K., Log-concavity results for a
 biparametric and an elliptic extension of the $q$-binomial coefficients,
 \href{https://arxiv.org/abs/2002.07796}{arXiv:2002.07796}.

\bibitem{elliptic_schlosser_2017}
Schlosser M.J., Yoo M., Elliptic rook and file numbers, \textit{Electron.~J.
 Combin.} \textbf{24} (2017), 1.31, 47~pages, \href{https://arxiv.org/abs/1512.01720}{arXiv:1512.01720}.

\bibitem{elliptic_schlosser_2018}
Schlosser M.J., Yoo M., Weight-dependent commutation relations and
 combinatorial identities, \href{https://doi.org/10.1016/j.disc.2018.05.007}{\textit{Discrete Math.}} \textbf{341} (2018),
 2308--2325, \href{https://arxiv.org/abs/1610.08680}{arXiv:1610.08680}.

\bibitem{thiel_anderson_2016}
Thiel M., From {A}nderson to zeta, \href{https://doi.org/10.1016/j.aam.2016.06.002}{\textit{Adv. in Appl. Math.}} \textbf{81}
 (2016), 156--201, \href{https://arxiv.org/abs/1504.07363}{arXiv:1504.07363}.

\bibitem{combinatorial_tirrell_2019}
Tirrell J., Sagan B.E., Lucas atoms, \href{https://arxiv.org/abs/1909.02593}{arXiv:1909.02593}.

\bibitem{elliptische_weber_1891}
Weber H., Elliptische functionen und algebraische zahlen, Vieweg-Verlag,
 Braunschweig, 1891.

\end{thebibliography}
\end{document}